\tikzset{->-/.style={decoration={
			markings,
			mark=at position #1 with {\arrow{>}}},postaction={decorate}}}
\tikzset{-<-/.style={decoration={
					markings,
					mark=at position #1 with {\arrow{<}}},postaction={decorate}}}
\newcommand\mnote[1]{\marginpar{\tiny #1}}
\newcommand{\bC}{\mathbb{C}}
\newcommand{\bF}{\mathbb{F}}
\newcommand{\bQ}{\mathbb{Q}}
\newcommand{\bR}{\mathbb{R}}
\newcommand{\bZ}{\mathbb{Z}}
\newcommand{\cL}{\mathcal{L}}
\newcommand\lra{\longrightarrow}
\newcommand\Diff{\mathrm{Diff}}
\newcommand\colim{\operatorname*{colim}}
\newcommand{\R}{\bR}
\renewcommand{\epsilon}{\varepsilon}
\newcommand{\SO}{\mathrm{SO}}
\newcommand{\OO}{\mathrm{O}}
\newcommand{\UU}{\mathrm{U}}
\newcommand{\Top}{\mathrm{Top}}
\newcommand{\STop}{\mathrm{STop}}
\newcommand{\Sign}{\mathrm{Sign}}
\newcommand{\ch}{\mathrm{ch}}
\mathchardef\ordinarycolon\mathcode`\:
\theoremstyle{plain}
\newtheorem{theorem}{Theorem}[section]
\newtheorem{proposition}[theorem]{Proposition}
\newtheorem{lemma}[theorem]{Lemma}
\newtheorem{corollary}[theorem]{Corollary}
\theoremstyle{definition}
\newtheorem{definition}[theorem]{Definition}
\theoremstyle{remark}
\newtheorem{remark}[theorem]{Remark}
\newtheorem*{remark*}{Remark}
\numberwithin{equation}{section}
\title[Algebraic independence of topological Pontryagin classes]{Algebraic independence of\\topological Pontryagin classes}
\author{S{\o}ren Galatius}
\email{galatius@math.ku.dk}
\address{Department of Mathematics\\
	University of Copenhagen\\
	Denmark}
\author{Oscar Randal-Williams}
\email{o.randal-williams@dpmms.cam.ac.uk}
\address{Centre for Mathematical Sciences\\
Wilberforce Road\\
Cambridge CB3 0WB\\
UK}
\date{\today}
\begin{document}
\begin{abstract}
We show that the topological Pontryagin classes are algebraically independent in the rationalised cohomology of $B\Top(d)$ for all $d \geq 4$.
\end{abstract}
\maketitle

\section{Statement}

The spaces $B\OO(d)$ classifying $d$-dimensional vector bundles carry two well-known kinds of rational characteristic classes. On one hand, for $d=2n$ there is the Euler class $e \in H^{2n}(B\OO(2n);\bQ^{w_1})$, in cohomology with twisted coefficients corresponding to the determinant line: its square yields an untwisted cohomology class $e^2 \in H^{4n}(B\OO(2n);\bQ)$. On the other hand, there are the Pontryagin classes $p_i \in H^{4i}(B\OO(d);\bQ)$, which are stable in the sense that they extend to $B\OO = \colim_d B\OO(d)$. By their construction these satisfy some elementary relations: 
\begin{align}\tag{$\dagger$}\label{rels}
\begin{split}
p_i &= 0  \text{ for } d < 2i,\\
p_n &= e^2  \text{ for } d=2n.
\end{split}
\end{align}
Furthermore, the Euler and Pontryagin classes give a complete description of the cohomology of $B\OO(d)$, and these relations are the only ones satisfied: we have
$$H^*(B\OO(d);\bQ) = \begin{cases}
\bQ[p_1, p_2, \ldots, p_n] & d=2n+1\\
\bQ[p_1, p_2, \ldots, p_n, e^2]/(p_n-e^2) & d=2n.
\end{cases}$$

There are analogous spaces $B\Top(d)$ classifying fibre bundles with fibre the euclidean space $\bR^d$, and structure group $\Top(d)$, the group of homeomorphisms of $\bR^d$ fixing the origin. Neglecting the fibrewise linear structure of a vector bundle gives maps $B\OO(d) \to B\Top(d)$, and the induced map of stabilisations $B\OO \to B\Top$ is a rational homotopy equivalence (combine \cite[Essay V Theorem 5.5]{kirbysiebenmann} with \cite{kervairemilnor}).
We therefore have an isomorphism
$$H^*(B\Top;\bQ) \overset{\sim}\longrightarrow H^*(B\OO;\bQ) = \bQ[p_1, p_2, p_3, \ldots].$$
This means that there are uniquely defined rational cohomology classes on $B\Top$ which restrict to the usual Pontryagin classes on $B\OO$: these are the topological Pontryagin classes. They can be pulled back to $B\Top(d)$ for any $d$. The (squared) Euler class is defined for an $S^{2n-1}$-fibration, so can also be constructed on $B\Top(2n)$ by considering $E\Top(2n) \times_{\Top(2n)}(\bR^{2n} \setminus 0) \to B\Top(2n)$.

As their construction on $B\Top(d)$ is indirect it is by no means clear whether the relations \eqref{rels} should be expected to hold here (cf.\ \cite[p.\ 210]{Su2}), and Weiss \cite{weissdalian} has recently proved the rather surprising fact that they do not. Our goal here is to show that in fact no equations hold among these classes, using very different methods to Weiss.

\begin{theorem}\label{thm:Even}
  For $2n \geq 6$ the maps
\begin{equation*}
\bQ[e^2, p_1, p_2, \ldots] \lra H^*(B\Top(2n);\bQ) 
\end{equation*}
are injective. 
\end{theorem}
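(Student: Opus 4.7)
The plan is to apply a topological version of the Madsen--Weiss theorem for the $2n$-dimensional manifolds $W_{g,1} := (\#^g S^n \times S^n) \setminus \mathrm{int}\, D^{2n}$ -- which is available since $2n \geq 6$, i.e.\ $n \geq 3$ -- and to translate the desired algebraic independence into the polynomial ring structure on the stable rational cohomology of topological mapping class groups.

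Let $\pi \colon E_g \to B\Homeo_\partial(W_{g,1})$ be the universal $W_{g,1}$-bundle with vertical tangent microbundle classified by $\tau \colon E_g \to B\Top(2n)$. For any class $c \in H^{|c|}(B\Top(2n); \bQ)$ with $|c| > 2n$ form the generalised Miller--Morita--Mumford class
$$\kappa_c := \pi_!(\tau^* c) \in H^{|c|-2n}(B\Homeo_\partial(W_{g,1}); \bQ).$$
The central input is that the Pontryagin--Thom/scanning map $B\Homeo_\partial(W_{g,1}) \to \Omega^\infty_{[0]}(\mathbf{MT\Top}(2n))$ induces a rational cohomology isomorphism in a stable range as $g \to \infty$. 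Combined with Serre's theorem on rational infinite loop spaces together with the Thom isomorphism, this identifies the stable rational cohomology of $B\Homeo_\partial(W_{g,1})$ with the free graded-commutative $\bQ$-algebra $\mathrm{Sym}_\bQ V$ on $V := H^{>2n}(B\Top(2n); \bQ^w)$ (with $w$ the orientation twist), the degree-one generator $c \in V$ being realised as $\kappa_c$.

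Given a putative nonzero polynomial $Q \in \bQ[e^2, p_1, p_2, \ldots]$ with $Q(e^2, p_1, \ldots) = 0$ in $H^*(B\Top(2n); \bQ)$, I would multiply $Q$ by a single monomial $m$ of sufficiently large degree so that every monomial appearing in $mQ$ has cohomological degree $>2n$, and so that the distinct formal monomials of $Q$ remain distinct after multiplication by $m$. The $\bQ$-linearity of the assignment $c \mapsto \kappa_c$ converts $mQ = 0$ into a relation $\sum_I a_I \kappa_{m_I} = 0$ in the stable rational cohomology of $B\Homeo_\partial(W_{g,1})$, where the $m_I$ are the distinct formal monomials of $mQ$. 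Provided the $m_I$ are $\bQ$-linearly independent as classes in $V$, this is a nontrivial relation amongst distinct polynomial generators of the free algebra $\mathrm{Sym}_\bQ V$, which is the desired contradiction.

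\textbf{Main obstacle.} The final step -- establishing $\bQ$-linear independence of distinct monomials in $e^2, p_1, p_2, \ldots$ as classes in $V = H^{>2n}(B\Top(2n); \bQ^w)$ -- is itself the linear version of the theorem and cannot be obtained from the Madsen--Weiss formalism alone. The plan for overcoming this is to exhibit, for each monomial, an explicit topological $W_{g,1}$-bundle over a suitable base on which the corresponding $\kappa$-class is geometrically detected and can be distinguished from the $\kappa$-classes of the other monomials, generalising Weiss's construction of topological non-smoothable bundles. The restriction $2n \geq 6$ enters both through the validity of the topological Madsen--Weiss theorem and through the parametrised topological transversality required to build these detecting families.
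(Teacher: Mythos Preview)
Your Madsen--Weiss step is vacuous: the stable cohomology is $\mathrm{Sym}_\bQ V$ with $V = H^{>2n}(B\Top(2n);\bQ^w)$, and the assignment $c \mapsto \kappa_c$ is simply the inclusion of $V$ as the degree-one part of $\mathrm{Sym}_\bQ V$. Your relation $\sum_I a_I \kappa_{m_I} = 0$ is therefore nothing but the original equation $\sum_I a_I m_I = 0$ sitting inside $V$, and asking that the $m_I$ be linearly independent in $V$ is exactly the statement you are trying to prove. No information has been gained; the free-algebra structure of $\mathrm{Sym}_\bQ V$ only controls \emph{products} of $\kappa$-classes, whereas your argument never leaves degree one. (There is also a minor issue: $e^2$ and the $p_i$ live in untwisted cohomology, so they are zero in $H^*(B\Top(2n);\bQ^w)$; you would need to work over $B\STop(2n)$ or multiply by $e$, but this is fixable and beside the point.)

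You acknowledge this in your ``Main obstacle'' paragraph, but the fix you propose---building explicit detecting bundles \`a la Weiss---is the entire content of the theorem, and you give no indication of how to do it beyond what Weiss already did in a limited range. The paper's proof does \emph{not} proceed this way. Instead it (i) uses Kupers' finiteness result to reduce to showing nonvanishing in $H^*(B\Top(2n);\bF_p)$ for infinitely many primes $p$, then (ii) for each such $p$ constructs homomorphisms $C_p \to \Diff^+(S^{2n-1}) \to \STop(2n)$ by Wall-realising prescribed multisignatures in $L_{2n}(\bZ[C_p])$, and (iii) computes the pulled-back $\cL$-classes by combining the family signature theorem with equivariant localisation for the resulting two-fixed-point $C_p$-action on a closed manifold. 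The upshot is that the $\cL_j$ for $\lceil n/2\rceil \le j \le (p-3)/2$ can be made to take arbitrary values in $H^*(BC_p;\bF_p)$, which detects any given polynomial once $p$ is large enough. No moduli-space or Madsen--Weiss input is used.
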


There is an immediate consequence in odd dimensions. There is a certain characteristic class $E$ of $S^{2n}$-fibrations which restricts to $e^2$ when evaluated on the fibrewise unreduced suspension of a $S^{2n-1}$-fibration (see \cite[Section 1.2.2]{KrannichR-WOdd} for more on this class). It can be evaluated on the underlying $S^{2n}$-fibration of the universal bundle over $B\Top(2n+1)$, and Theorem \ref{thm:Even} then implies: 

\begin{corollary}\label{cor:Odd}
For $2n+1 \geq 7$ the map
\begin{equation*}
\bQ[E, p_1, p_2, \ldots] \lra H^*(B\Top(2n+1);\bQ)
\end{equation*}
is injective. 
\end{corollary}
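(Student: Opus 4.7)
The strategy is to reduce the corollary to Theorem \ref{thm:Even} by stabilisation. Let $\sigma \colon B\Top(2n) \to B\Top(2n+1)$ denote the map induced by $\Top(2n) \to \Top(2n+1)$, $f \mapsto f \times \id_\bR$. The plan is to build a commutative square
\[
\begin{array}{ccc}
\bQ[E, p_1, p_2, \ldots] & \longrightarrow & H^*(B\Top(2n+1);\bQ) \\
\downarrow & & \downarrow\\
\bQ[e^2, p_1, p_2, \ldots] & \longrightarrow & H^*(B\Top(2n);\bQ)
\end{array}
\]
whose right vertical is $\sigma^*$, whose bottom row is the map from Theorem \ref{thm:Even} (which is injective because $2n \geq 6$), and whose left vertical is an isomorphism of polynomial rings. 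Injectivity of the top row then follows by a standard diagram chase.

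The identification $\sigma^* p_i = p_i$ is immediate, since the topological Pontryagin classes on any $B\Top(d)$ are by construction pulled back from $B\Top$, hence natural in $d$.

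The substantive step is to show $\sigma^* E = e^2$. Along $\sigma$, the universal $\bR^{2n+1}$-bundle pulls back to $V \times \bR$, where $V$ is the universal $\bR^{2n}$-bundle over $B\Top(2n)$. Its associated $S^{2n}$-fibration $(V\times\bR) \setminus 0 \to B\Top(2n)$ must be identified, up to fibre-homotopy equivalence, with the fibrewise unreduced suspension of the $S^{2n-1}$-fibration $(V\setminus 0) \to B\Top(2n)$. Because $\Top(2n)$ acts trivially on the extra $\bR$-summand, one can construct an equivariant fibre-homotopy equivalence using the two canonical sections $b \mapsto (0_b, \pm 1)$ as the cone points of the suspension. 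The characterising property of $E$ recalled in the introduction then forces $\sigma^* E = e^2$.

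With both identifications in hand, the left vertical map carries the polynomial generators $E, p_1, p_2, \ldots$ bijectively onto the polynomial generators $e^2, p_1, p_2, \ldots$ of another free polynomial ring, and so is an isomorphism, completing the argument. The single non-formal input is the fibre-homotopy identification of the two $S^{2n}$-fibrations; this is the step to pin down carefully in the purely topological setting, where no radial scaling on fibres is available.
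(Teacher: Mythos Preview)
Your proposal is correct and is essentially identical to the paper's argument: the corollary is deduced from Theorem~\ref{thm:Even} by pulling back along the stabilisation map $B\Top(2n) \to B\Top(2n+1)$, using that $E$ restricts to $e^2$ on the fibrewise unreduced suspension and that the $p_i$ are stable. The paper simply states this in one sentence rather than writing out the commuting square, but the content is the same.
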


\begin{remark}
Weiss' result (and those of \cite{KR-WDisks} and \cite{KrannichR-WOdd} in this direction) is weaker in that it shows that $p_i \neq 0$ on $B\Top(d)$ for $i < f(d)$ with $f(d)-2d \to \infty$ (rather than all $p_i$ being nonzero, and even algebraically independent, on a single $B\Top(d)$), but is stronger in that it shows that these classes evaluate nontrivially against the image of the Hurewicz map. Krannich and Kupers have announced the result that all $p_i$ evaluate nontrivially against the image of the Hurewicz map on $B\Top(d)$ for any $d \geq 6$. This property does not seem to be accessible by the methods we use here.
\end{remark}

Identifying $D^{2n}$ with the cone on $S^{2n-1}$ yields continuous homomorphisms
\begin{equation}\label{eq:17}
  \Diff(S^{2n-1}) \lra \Top(S^{2n-1}) \lra \Top(D^{2n}) \lra \Top(2n),
\end{equation}
so we may pull back the (squared) Euler and Pontryagin classes to $\Diff(S^{2n-1})$.  As a byproduct of our proof of the above results, we obtain analogous results for the classifying spaces of any of these four groups, for example:
\begin{theorem}\label{thm:diff-spheres}
  For $2n-1 \geq 9$ the homomorphism
  \begin{equation*}
    \bQ[e^2, p_1, p_2, \dots] \lra H^*(B\Diff(S^{2n-1});\bQ)
  \end{equation*}
  is injective.
\end{theorem}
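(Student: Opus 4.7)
The plan is to deduce this theorem from Theorem \ref{thm:Even} by exhibiting, for each nonzero monomial in $\bQ[e^2, p_1, p_2, \ldots]$, a homology class in $B\Diff(S^{2n-1})$ that detects it. Because the classes $e^2$ and $p_i$ on $B\Diff(S^{2n-1})$ are by construction pulled back from $B\Top(2n)$ along the composition
\begin{equation*}
B\Diff(S^{2n-1}) \lra B\Top(S^{2n-1}) \lra B\Top(D^{2n}) \lra B\Top(2n)
\end{equation*}
induced by \eqref{eq:17}, it suffices to show that the detecting classes produced by the proof of Theorem \ref{thm:Even} in $H_*(B\Top(2n);\bQ)$ already lie in the image of the induced map $H_*(B\Diff(S^{2n-1});\bQ) \to H_*(B\Top(2n);\bQ)$.

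Concretely, I expect the proof of Theorem \ref{thm:Even} to furnish, in each cohomological degree $4N$, a finite family of smooth oriented $D^{2n}$-bundles $\pi_\alpha\colon E_\alpha \to X_\alpha$ over closed oriented $4N$-manifolds, whose characteristic numbers of the form $\int_{X_\alpha} e(\partial^v E_\alpha)^{2a} \prod_j p_{i_j}(T^v E_\alpha)$ form a non-singular pairing with the monomials of weight $4N$ in $\bQ[e^2, p_1, \ldots]$. Such a smooth closed $D^{2n}$-bundle is recovered by fibrewise coning from its boundary sphere bundle $\bar\pi_\alpha\colon \partial E_\alpha \to X_\alpha$, and $\bar\pi_\alpha$ is classified by a map $\tilde f_\alpha\colon X_\alpha \to B\Diff(S^{2n-1})$ whose composite with the above chain recovers the $\Top(2n)$-bundle underlying $\pi_\alpha$. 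Hence $(\tilde f_\alpha)_*[X_\alpha] \in H_{4N}(B\Diff(S^{2n-1});\bQ)$ is the required lift, and detecting the monomial there follows from the naturality of $e^2$ and the $p_i$.

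The main obstacle is the verification that the cycles appearing in the proof of Theorem \ref{thm:Even} can indeed be taken to come from smooth $D^{2n}$-bundles, equivalently from smooth $S^{2n-1}$-bundles via coning. If the proof of Theorem \ref{thm:Even} runs through intrinsically topological machinery (for instance a Madsen--Tillmann--Weiss argument for $B\Top(2n)$), then a smoothing-theoretic refinement is needed to lift each testing bundle to a smooth one, and the stronger hypothesis $2n-1 \geq 9$ (rather than $2n \geq 6$) is presumably exactly what is required for this lift: it brings one into the stable range where the rational homotopy fibre of $B\Diff(D^{2n},\partial) \to B\Top(D^{2n},\partial)$ is sufficiently well-behaved (via the image-of-$J$/algebraic $K$-theory description) that the relevant characteristic numbers are unchanged by the smoothing. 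A cleaner alternative is to carry out the proof of Theorem \ref{thm:Even} in the smooth category from the start, producing both theorems in parallel.
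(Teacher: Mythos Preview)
Your proposal rests on an expectation about the proof of Theorem~\ref{thm:Even} that does not match what the paper does, and the main approach you outline would not go through as stated.

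The paper does \emph{not} detect the classes $e^2, p_1, p_2,\ldots$ by pairing against rational homology classes coming from bundles over closed manifolds; in fact the authors remark explicitly that their methods do not produce classes in the image of the Hurewicz map. Instead, for each nonzero $\Xi \in \bQ[e^2,\cL_1,\cL_2,\ldots]$ they choose a $\bZ[\tfrac1{N!}]$-integral lift and then construct, for each prime $p>N$, a homomorphism $\phi\colon C_p \to \Diff^+(S^{2n-1})$ (a free smooth $C_p$-action on the sphere, built via Wall realisation in the $L$-group $L_{2n}(\bZ[C_p])$) such that $(B\phi)^*\Xi \neq 0$ in $H^*(BC_p;\bF_p)$. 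Since $BC_p$ has no rational homology above degree~0, none of these maps individually detects anything rationally; what makes the argument work is Kupers' theorem that $B\Diff(S^{2n-1})$ has finite type for $2n-1 \geq 9$, so nonvanishing in $\bF_p$-cohomology for infinitely many $p$ forces nonvanishing in $\bQ$-cohomology. This is also the true source of the dimension restriction: the $C_p$-actions themselves exist for all $2n-1 \geq 5$ (giving Theorem~\ref{thm:low-dimensional}), and it is only the finiteness input that needs $2n-1 \geq 9$. Your speculation that the bound comes from smoothing theory for $\Diff(D^{2n},\partial) \to \Top(D^{2n},\partial)$ is therefore off the mark.

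Ironically, your closing sentence is the one that is actually correct: the paper does ``carry out the proof in the smooth category from the start,'' in the sense that the detecting maps $BC_p \to B\STop(2n)$ are constructed so as to factor through $B\Diff^+(S^{2n-1})$ by design (this is the last clause of Proposition~\ref{prop:NonLinRep}). Theorems~\ref{thm:Even} and~\ref{thm:diff-spheres} are thus proved in parallel by the same construction, with the differing dimension hypotheses reflecting only the differing ranges in which Kupers' finiteness theorems apply.
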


\subsection{Strategy}
\label{sec:strategy}

To show that a non-zero abstract polynomial $\Xi \in \bQ[e^2, p_1, p_2, \dots]$ defines a non-zero class in $H^*(B\Top(2n);\bQ)$, it is convenient to rewrite it in terms of the geometrically more significant Hirzebruch $L$-classes $\cL_i$, which are related to the Pontryagin classes by an invertible polynomial transformation.  As we explain in Section~\ref{sec:recoll-mathc-class} below, they admit preferred lifts to classes
\begin{equation*}
  \cL_i \in H^{4i}(B\Top(2n);\bZ[\tfrac1{(2i+1)!}]).
\end{equation*}
By a recent theorem of Kupers \cite{kupersdisk} the space $B\Top(2n)$ has finite type for $2n \geq 6$, so to show that a non-zero abstract polynomial $\Xi \in \bQ[e^2, \cL_1, \dots, \cL_m]$ defines a non-zero class in $H^*(B\Top(2n);\bQ)$, it suffices to
\begin{enumerate}[(i)]
\item choose a $\bZ[\frac1{N!}]$-integral lift of $\Xi$ for some $N \geq 2m+1$,
\item prove that the image of the lift of $\Xi$ is non-zero in $H^*(B\Top(2n);\bF_p)$ for infinitely many primes $p > N$.
\end{enumerate}
Similarly for $B\Diff(S^{2n-1})$, which Kupers has also shown to have finite type as long as $2n-1 \geq 9$.

This is what we will do, by constructing homomorphisms $\phi: C_p \to \Diff(S^{2n-1})$ adapted to the polynomial $\Xi$ to show that $(B\phi)^*\Xi \neq 0 \in H^*(BC_p;\bF_p)$.

\subsection{Finiteness and low dimensional cases}
\label{sec:finit-low-dimens}

In the form stated above, the proofs of Theorems~\ref{thm:Even} and~\ref{thm:diff-spheres} depend on Kupers' finiteness results.  In turn, Kupers' work relies on ideas of Weiss \cite{weissdalian}, in particular the combination of embedding calculus with the authors' previous work on moduli spaces of manifolds. This dependence may be avoided however, by replacing rational cohomology by rationalised integral cohomology $X \mapsto H^*(X;\bZ) \otimes \bQ$. To prove that a class $x \in H^{2r}(X;\bZ) \otimes \bQ$ is non-zero, it suffices---whether or not the space $X$ is known to have finite type---to choose a lift $x' \in H^{2r}(X;\bZ) \otimes \bZ[\tfrac1{N!}]$ for some $N$ and construct maps $j_p: BC_p \to X$ with $j_p^*(x') \neq 0 \in H^{2r}(BC_p;\bZ) \otimes \bZ[\tfrac1{N!}]$ for infinitely many primes $p > N$. Using this strategy with $X = B\Top(2n)$ or $X = B\Diff(S^{2n-1})$ the finiteness question may be circumvented, and we obtain the following.
\begin{theorem}\label{thm:low-dimensional}
  The homomorphisms
  \begin{align*}
    \bQ[e^2, p_1, p_2, \dots] &\lra H^*(B\Top(2n);\bZ) \otimes \bQ & 2n \geq 4\\
    \bQ[E, p_1, p_2, \dots] &\lra H^*(B\Top(2n+1);\bZ) \otimes \bQ & 2n+1 \geq 5\\
    \bQ[e^2, p_1, p_2, \dots] &\lra H^*(B\Diff(S^{2n-1});\bZ) \otimes \bQ & 2n-1 \geq 5
  \end{align*}
  are injective.
\end{theorem}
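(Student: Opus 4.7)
The plan is to run the same strategy as for Theorems~\ref{thm:Even} and~\ref{thm:diff-spheres}, but with the functor $X \mapsto H^*(X;\bZ)\otimes\bQ$ in place of $H^*(X;\bQ)$. Kupers' finite-type theorem is used in the earlier proofs only to ensure that a rational class admits, after clearing denominators, an integral lift of controlled form, which propagates ``non-zero modulo $p$ for infinitely many $p$'' back to ``non-zero rationally''. Formulating the target as rationalised integral cohomology makes this propagation step unconditional, so the dimension hypothesis drops to whatever range the $C_p$-action construction itself tolerates, namely $2n \geq 4$ and $2n-1 \geq 5$.

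Concretely, given a non-zero polynomial $\Xi$ in the stable classes, use the preferred $\bZ[\tfrac{1}{(2i+1)!}]$-integral refinements of the $\cL_i$ from Section~\ref{sec:recoll-mathc-class} to produce an integral lift $\xi' \in H^*(X;\bZ[\tfrac1{N!}])$ of the image of $\Xi$, where $N \geq 2m+1$ and $4m$ bounds the degrees occurring in $\Xi$. For each of infinitely many primes $p > N$, construct a map $j_p \colon BC_p \to X$ with $j_p^*\xi'$ of non-trivial mod-$p$ reduction: for $X = B\Diff(S^{2n-1})$ with $2n-1\geq 5$ these are the $B\phi_p$ from Section~\ref{sec:strategy}, and for $X = B\Top(2n)$ or $B\Top(2n+1)$ they are obtained by composing with the homomorphisms in \eqref{eq:17} (using the $E$ versus $e^2$ relation in the odd case), supplemented by a hand construction in the lowest dimensions, as discussed below. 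If the image of $\Xi$ in $H^*(X;\bZ)\otimes\bQ$ were zero then $n\xi' = 0$ in $H^*(X;\bZ[\tfrac1{N!}])$ for some positive integer $n$, and then $j_p^*\xi' \equiv 0 \pmod{p}$ for every prime $p$ coprime to $n$---contradicting the construction for infinitely many such $p$. Hence the image is non-zero, as desired.

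The main obstacle is the construction of the $j_p$ in the stated dimension ranges, which is the technical heart of the paper. The most delicate case is $B\Top(4)$: Hatcher's theorem $\Diff(S^3) \simeq \OO(4)$ forces any $j_p$ factoring through $B\Diff(S^3) \to B\Top(4)$ to further factor through $B\OO(4) \hookrightarrow B\Top(4)$, whence it can only see the standard rank-$4$ smooth Pontryagin classes and cannot detect $p_i$ with $i \geq 3$ nor distinguish $p_2$ from $e^2$. A genuinely topological (non-smoothable) $C_p$-action on $\bR^4$ fixing the origin is therefore required, from which the $B\Top(5)$ case can presumably be deduced via the unreduced suspension construction and the $E$ versus $e^2$ identification. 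Once such actions are constructed in each of the three families, the remainder of Theorem~\ref{thm:low-dimensional} is the elementary torsion argument given above, which cleanly separates the geometric input (construction of the $j_p$) from the coefficient-ring bookkeeping.
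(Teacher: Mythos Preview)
Your reduction is exactly the paper's: pass to rationalised integral cohomology, lift $\Xi$ to $\bZ[\tfrac1{N!}]$-coefficients using the integral refinements of the $\cL_i$, and detect the lift mod~$p$ along maps $j_p: BC_p \to X$ for infinitely many primes $p>N$.  The torsion argument you give (if the image were zero then some multiple vanishes integrally, contradicting nontriviality mod~$p$ for large~$p$) is precisely the content of Section~\ref{sec:finit-low-dimens}, and the deduction of the odd-dimensional and $B\Diff(S^{2n-1})$ cases from the even one is the same as in the paper.

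There is one point where your description of the $2n=4$ case diverges from what the paper actually does.  You write that ``a genuinely topological $C_p$-action on $\bR^4$ fixing the origin is therefore required'', i.e.\ a homomorphism $C_p \to \Top(4)$.  In fact Proposition~\ref{prop:NonLinRep} for $n=2$ only promises a \emph{map} $B\phi: BC_p \to B\STop(4)$, and that is all the argument needs.  The construction produces a $C_p$-action not on $\bR^4$ but on a closed topological $4$-manifold $\widehat{W}$: one builds the smooth $C_p$-manifold $W$ as for $n\geq 3$, but since $\partial W$ is now only a homology $3$-sphere (possibly not simply connected), one cannot cone it off; instead one invokes Freedman--Quinn to cap $\partial W$ by a contractible topological $4$-manifold $D$ with $C_p$-action and a single fixed point $t$.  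The map $B\phi$ then classifies the fibrewise tangent \emph{microbundle} $t^*T_\pi E$, with no claim that it arises from a homomorphism into $\Top(4)$.  So the ``hand construction'' you anticipate is not an exotic action on $\bR^4$ but rather an appeal to $4$-dimensional topological surgery to close up the manifold; the equivariant localisation and family signature arguments of Section~\ref{sec:equiv-local} then go through unchanged.
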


\begin{remark}\label{rem:WeirdCoefficients}
  For a different perspective on the use of finite cyclic groups to detect non-torsion classes, we can replace rationalised integral cohomology by certain other functors.  For any space $X$, there are natural homomorphisms
\begin{align}\label{eq:16}
\begin{split}
  H^j(X;\bZ) \otimes \bQ \lra \colim_{N \to \infty} H^j(X;\bZ[\tfrac1{N!}]) &\lra \colim_{N \to \infty} \prod_{p > N} H^j(X; \bF_p)
   \\ =  H^j(X;\prod_p \bF_p) \otimes_\bZ \bQ &\lra H^j(X;\prod_p \bF_p) \otimes_{(\prod \bF_p)} L,
\end{split}
\end{align}
where the subscript $p$ runs over prime numbers and $L = (\prod_p \bF_p)/\mathfrak{m}$ for any maximal ideal $\mathfrak{m}$ containing all additive torsion elements.  For any subset $P$ of the set of prime numbers, the characteristic function defines an idempotent $e_P \in \prod_p \bF_p$ whose image in $L$ is either 0 or 1, depending on whether $e_P \in \mathfrak{m}$ or $1-e_P \in \mathfrak{m}$.  If $P$ excludes only finitely many primes then $1-e_P \in \mathfrak{m}$, and for any infinite set $P$ of prime numbers we can choose $\mathfrak{m}$ to contain $1-e_P$.

The rightmost ``cohomology theory'' in \eqref{eq:16} has the property that the canonical maps $BC_p \to \bC P^\infty$ induce an isomorphism
\begin{align}\label{eq:19}
\begin{split}
  H^{2r}(\bC P^\infty;L) &= H^{2r}(\bC P^\infty;\prod_{p \in P} \bF_p) \otimes_{(\prod \bF_p)} L \\
& \quad\quad\quad\quad   \lra \bigg(\prod_{p \in P} H^{2r}(BC_p;\bF_p)\bigg) \otimes_{(\prod \bF_p)} L,
  \end{split}
\end{align}
for any set $P$ of prime numbers such that $1-e_P \in \mathfrak{m}$. A collection of maps $j_p: BC_p \to X$ for an infinite set of prime numbers $p$ can therefore be used as a substitute for a single map $j: \bC P^\infty \to X$, at least for cohomological purposes such as detecting cohomology classes. By the strategy we have explained, our algebraic independence results (Theorem~\ref{thm:low-dimensional} and Theorem~\ref{thm:discrete} below) also hold for $H^*(-;\prod_p \bF_p) \otimes_{(\prod \bF_p)} L$, as well as the other functors in~\eqref{eq:16}.  

Because of the homomorphisms~\eqref{eq:16}, algebraic independence for these functors is in principle a stronger statement than for rationalised integral cohomology, in the absense of finiteness results.
\end{remark}

The statement of Theorem~\ref{thm:low-dimensional} is more elementary than the more recognisable statements of Theorems~\ref{thm:Even} and~\ref{thm:diff-spheres}, in the sense that it
\begin{enumerate}[(i)]
\item holds in a wider range of dimensions;
\item will be given an ``elementary'' proof, not relying on techniques or results from \cite{weissdalian}, \cite{kupersdisk}, moduli spaces of manifolds, or embedding calculus;
\item is easily seen to be equivalent to Theorems~\ref{thm:Even} and~\ref{thm:diff-spheres} using Kupers' finiteness results, in the dimensions where they apply.
\end{enumerate}
Rationalised integral cohomology is slightly awkward in some ways though.  For example, while we prove that $e^2 - p_2 \neq 0 \in H^8(B\Top(4);\bZ) \otimes \bQ$, we cannot rule out that this cohomology class might nevertheless vanish universally when evaluated on $\bR^4$-bundles over closed $8$-manifolds.

The homomorphisms $\phi: C_p \to \Diff(S^{2n-1})$ that we use to detect cohomology classes are of course continuous also when $\Diff(S^{2n-1})$ is given the discrete topology.  The proof of Theorem~\ref{thm:low-dimensional} in fact shows the following stronger result.
\begin{theorem}\label{thm:discrete}
  Let $2n-1 \geq 5$ and let $\Diff^\delta(S^{2n-1})$ denote the diffeomorphism group regarded as a discrete group.  Then the homomorphism
  \begin{align*}
    \bQ[e^2, p_1, p_2, \dots] &\lra H^*(B\Diff^\delta(S^{2n-1});\bZ) \otimes \bQ
  \end{align*}
  is injective.
\end{theorem}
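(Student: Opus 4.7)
The plan is to reduce this to the same detection argument that proves the $B\Diff(S^{2n-1})$-case of Theorem \ref{thm:low-dimensional}, and to exploit that the construction makes no essential use of the topology on $\Diff(S^{2n-1})$.  By definition, the classes $e^2$ and $p_i$ on $B\Diff^\delta(S^{2n-1})$ are the pullbacks of the corresponding classes on $B\Diff(S^{2n-1})$ along the continuous map $B\Diff^\delta(S^{2n-1}) \to B\Diff(S^{2n-1})$ that forgets the topology, so it suffices to show that for every non-zero polynomial $\Xi \in \bQ[e^2, p_1, p_2, \ldots]$ there is a map $BC_p \to B\Diff^\delta(S^{2n-1})$ on which $\Xi$ evaluates nontrivially in $H^{*}(BC_p;\bZ) \otimes \bQ$.

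Following the strategy in Sections \ref{sec:strategy} and \ref{sec:finit-low-dimens}, I would first rewrite $\Xi$ in terms of the Hirzebruch $L$-classes $\cL_i$ and choose a $\bZ[\tfrac{1}{N!}]$-integral lift $\Xi'$ for some sufficiently large $N$.  The core of the argument is then the construction, for infinitely many primes $p > N$, of continuous homomorphisms $\phi_p\colon C_p \to \Diff(S^{2n-1})$ which are adapted to $\Xi'$ in the sense that the induced classifying map $B\phi_p\colon BC_p \to B\Diff(S^{2n-1})$ pulls $\Xi'$ back to a non-zero class in $H^{*}(BC_p;\bF_p)$.  As explained in Section \ref{sec:finit-low-dimens}, this already implies the non-vanishing of $\Xi$ in $H^{*}(B\Diff(S^{2n-1});\bZ) \otimes \bQ$ and yields the third line of Theorem \ref{thm:low-dimensional}.

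To upgrade this to a statement about $B\Diff^\delta(S^{2n-1})$, I would use that $C_p$ is a finite, hence discrete, group: any group homomorphism $C_p \to \Diff(S^{2n-1})$ is automatically continuous with respect to the discrete topology on the target, so $\phi_p$ factors through $\Diff^\delta(S^{2n-1})$ and the induced map of classifying spaces factors as $BC_p \to B\Diff^\delta(S^{2n-1}) \to B\Diff(S^{2n-1})$.  Because the characteristic classes on $B\Diff^\delta(S^{2n-1})$ were defined as pullbacks, the non-zero class $(B\phi_p)^*\Xi'$ is already the image from $B\Diff^\delta(S^{2n-1})$, and its non-triviality on $BC_p$ therefore forces the non-triviality of $\Xi$ on $B\Diff^\delta(S^{2n-1})$.

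The main obstacle, and really the heart of the paper, is the construction of the adapted homomorphisms $\phi_p\colon C_p \to \Diff(S^{2n-1})$ for the proof of Theorem \ref{thm:low-dimensional}.  Once such a family is available, the additional content of Theorem \ref{thm:discrete} is only the automatic-continuity observation above; in particular, no new geometric input, no finiteness theorem of \cite{kupersdisk}, and no modification of the dimensional range $2n-1\geq 5$ is needed to pass from the topological diffeomorphism group to its discretisation.
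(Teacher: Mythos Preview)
Your proposal is correct and is essentially identical to the paper's own argument: the paper remarks that the detecting homomorphisms $\phi: C_p \to \Diff(S^{2n-1})$ used in the proof of Theorem~\ref{thm:low-dimensional} are of course continuous for the discrete topology on $\Diff(S^{2n-1})$, so the same detection goes through for $B\Diff^\delta(S^{2n-1})$. No additional input beyond this factorisation is used in the paper either.
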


Finally, let us  remark that by well-known theorems of Smale \cite{smaledisk} and Hatcher \cite{hatcherdisk}, the maps $B\OO(d) \to B\Top(d)$ and $B\OO(d+1) \to B\Diff(S^d)$ are  equivalences for $d \leq 3$, so the analogous maps in Theorem~\ref{thm:low-dimensional} are \emph{not} injective in these dimensions.

\subsection{Acknowledgements}

ORW was supported by the ERC under the European Union's Horizon 2020 research and innovation programme (grant agreement No.\ 756444) and by a Philip Leverhulme Prize from the Leverhulme Trust. SG was supported
by the European Research Council (ERC) under the European Union's Horizon 2020 research
and innovation programme (grant agreement No. 682922), and by the
Danish National Research Foundation (DNRF151).

\section{Linear and non-linear representations}

It will be convenient to replace the Pontryagin classes $p_i$ by the more geometrically significant Hirzebruch $L$-classes.

\subsection{Recollections on $\mathcal{L}$-classes}
\label{sec:recoll-mathc-class}

The Hirzebruch class $\mathcal{L}$ is a characteristic class of real vector bundles with values in rational cohomology, uniquely characterised by the property $\mathcal{L}(V \oplus W) = \mathcal{L}(V) \cdot \mathcal{L}(W)$ for vector  bundles $V$ and $W$, together with the formula
\begin{equation*}
  \mathcal{L}(L)  = f(c_1(L))
\end{equation*}
when $L$ is (the 2-dimensional real vector bundle underlying) a complex line bundle, where $f$ is the (even) power series given by $f(t) = t/\tanh(t) \in \bQ[[t]]$. This is a rephrasing of Hirzebruch's definition \cite[\S 1.5]{HirzebruchBook} via multiplicative sequences. There is a universal class  
\begin{equation*}
  \mathcal{L} = \sum_{i = 0}^\infty \mathcal{L}_i \in H^*(B\OO;\bQ)
\end{equation*}
with the property that a closed oriented $4k$-manifold $M$ has its signature computed as $\sigma(M) = \int_M \cL_k(TM)$: this is Hirzebruch's signature theorem.

Reducing the power series $f(t) = t/\tanh(t)$ modulo $t^{2i+1}$ gives a unit of the ring $\bZ[\frac1{(2i+1)!}][[t]]/(t^{2i+1})$, because the same is true for $\cosh(t)$ and $t^{-1}\sinh(t)$ by inspection, and in particular the coefficient of $t^{2i}$ in $f(t)$ lies in $\bZ[\frac1{(2i+1)!}]$.  Using that the direct sum map $B\UU(1)^n \to B\OO(2n)$ induces an isomorphism from $H^*(B\OO(2n);\bZ[\tfrac12])$ onto the subring of $H^*(B\UU(1)^n;\bZ[\frac12]) = \bZ[\frac12][t_1, \dots, t_n]$ consisting of the symmetric polynomials in $t_1^2, \ldots, t_n^2$, we deduce that there are unique $\bZ[\tfrac{1}{(2i+1)!}]$-integral refinements
\begin{equation}\label{eq:BOrefinement}
\cL_i \in H^{4i}(B\OO ; \bZ[\tfrac{1}{(2i+1)!}]).
\end{equation}
This refinement, when evaluated on the real vector bundle underlying a complex line bundle $L$, is given by the coefficient of $t^{2i}$ in the power series $f(c_1(L) \cdot t)$.

The induced ring homomorphism
\begin{equation*}
  \bQ[\cL_1, \cL_2, \dots] \lra H^*(B\OO;\bQ)
\end{equation*}
is an isomorphism, because it has the form (see \cite[\S 1.5]{HirzebruchBook})
\begin{equation}\label{eq:10}
  \cL_i \longmapsto \frac{2^{2i}(2^{2i-1}-1) |B_{2i}|}{(2i)!} p_i + \text{polynomial in $p_j$'s with $j < i$},
\end{equation}
where $B_{2i} \in \bQ^\times$ is the Bernoulli number.
Therefore over $\bQ$ the Hirzebruch classes can be expressed as polynomials in the Pontryagin classes, and vice versa.

Since $B\OO \to B\Top$ is a rational equivalence, the classes $\cL_i$ are the restrictions of unique classes in the cohomology of $B\Top$, expressed in terms of the topological Pontryagin classes by the same polynomials. The classes $\mathcal{L}_i \in H^{4i}(B\Top;\bQ)$ admit canonical $\bZ[\frac1{(2i+1)!}]$-integral refinements as follows.  The composition of spectrum maps
\begin{equation*}
  M\STop \xrightarrow{\Delta_\Top} ko[\tfrac12] \xrightarrow{-\otimes\bC} ku[\tfrac12] \xrightarrow{\mathrm{ch}_{2i}} \Sigma^{4i} H\bZ[\tfrac1{(2i+1)!}],
\end{equation*}
where $\Delta_\Top$ is the Sullivan orientation (see \cite[Section 3.3]{RWSign}), corresponds under Thom isomorphism to a  class $\overline{\cL}_i \in H^{4i}(B\STop;\bZ[\frac1{(2i+1)!}])$.  Since $B\STop \to B\Top$ is a $\bZ[\frac12]$-equivalence and since $2i+1 > 2$, these classes extend uniquely to $H^{4i}(B\Top;\bZ[\frac1{(2i+1)!}])$.  The relationship to the Hirzebruch classes is
\begin{equation*}
  \cL_i(V) = \overline{\cL}_i(-V)
\end{equation*}
for virtual bundles $V$, and this formula provides a preferred $\bZ[\frac1{(2i+1)!}]$-integral refinement of $\cL_i$. Again, as  $H^{4i}(B\OO ; \bZ[\tfrac{1}{2}])$ is torsion-free these refinements pull back to  \eqref{eq:BOrefinement} on $B\OO$.

Finally, $B\Top$ has finite type, by the finiteness of $\pi_j(\Top/\OO)$ for all $j$ (by \cite[Essay V Theorem 5.5]{kirbysiebenmann} and \cite{kervairemilnor}) and the well known CW structure on $B\OO$.    Therefore the canonical homomorphism
\begin{equation*}
  H^{4i}(B\Top;\bZ) \otimes \bZ[\tfrac1{(2i+1)!}] \lra H^{4i}(B\Top;\bZ[\tfrac1{(2i+1)!}])
\end{equation*}
is an isomorphism, so $\cL_i$ lifts uniquely to a class which may be pulled back to
\begin{equation*}
  \cL_i \in H^{4i}(B\Top(d);\bZ) \otimes \bZ[\tfrac1{(2i+1)!}].
\end{equation*}

\subsection{$L$-classes of linear and non-linear representations}

As mentioned above, our main tool for studying the algebraic independence of $e^2, \cL_1, \cL_2, \ldots, \cL_k $ is to calculate their pullback along a large supply of maps $BC_p \to B\Top(2n)$ for varying primes $p > 2k+1$, where $C_p$ denotes the cyclic group of order $p$. We consider this group as the subgroup of $p$th roots of unity inside $\UU(1)$, and write $\chi : C_p \to \UU(1)$ for the inclusion considered as a 1-dimensional complex representation. We abbreviate the first Chern class of this representation as $c := c_1(\chi)$, so that 
$$H^*(BC_p ; \bZ) = \bZ[c]/(p \cdot c).$$
 To begin with, we have the following criterion for a tuple of classes in $H^*(BC_p;\bZ)$ to arise as the characteristic classes of \emph{linear} representations.
\begin{proposition}\label{prop:LinRep}
  Let $\rho: C_p \to \SO(2n)$ be a representation. Then there exist classes $a_1, \dots, a_n \in H^2(BC_p;\bZ)$ such that
  \begin{equation}
    \begin{aligned}
      e(\rho) & = \prod_{j = 1}^n a_j \in H^{2n}(BC_p;\bZ)\\
      \mathcal{L}_i(\rho) & = \ell_i(a_1, \dots, a_n) \in H^{4i}(BC_p; \bZ[\tfrac{1}{(2i+1)!}])
    \end{aligned}\label{eq:2}
  \end{equation}
  where $\ell_i$ are the symmetric polynomials characterised by
  \begin{equation*}
    \prod_{j=1}^n f(a_j t) = \sum_{i = 0}^\infty \ell_i(a_1, \dots, a_n) t^{2i},
  \end{equation*}
  with $f(t) = t/\tanh(t)$.

  Conversely, given any $a_1, \dots, a_n \in H^2(BC_p;\bZ)$, there exists a homomorphism $\rho: C_p \to \SO(2n)$ satisfying~\eqref{eq:2}.
\end{proposition}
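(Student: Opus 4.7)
The plan is to decompose $\rho$ as a direct sum of two-dimensional real summands each underlying a one-dimensional complex character of $C_p$, and to then apply multiplicativity of the Euler class and of the total $\cL$-class together with the formula $\cL(L) = f(c_1(L))$ for realifications of complex line bundles recalled in Section \ref{sec:recoll-mathc-class}.

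For the forward direction, one uses the classification of irreducible real orthogonal representations of $C_p$: the trivial one-dimensional representation (and, if $p=2$, the sign representation) together with the two-dimensional rotation representations arising as realifications of complex characters $\chi^k \colon C_p \to \UU(1)$. Since $\rho$ has even total dimension $2n$ and lands in $\SO(2n)$, the number of sign summands (if any) is even and may be paired into two-dimensional rotations by $\pi$, i.e.\ realifications of $\chi^{p/2}$, while trivial summands pair into two-planes carrying the trivial complex structure. This produces a decomposition $\rho \cong \bigoplus_{j=1}^n L_j$ in which each $L_j$ is the realification of a complex character $\chi^{k_j}$. Set $a_j := c_1(L_j) = k_j \cdot c \in H^2(BC_p;\bZ)$. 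Multiplicativity of the Euler class together with $e(L_j) = c_1(L_j)$ gives $e(\rho) = \prod_j a_j$. For the $\cL$-classes, multiplicativity gives $\cL(\rho) = \prod_j f(c_1(L_j)) = \prod_j f(a_j)$, whose degree-$4i$ component is $\ell_i(a_1,\dots,a_n)$ by the defining identity $\prod_j f(a_j t) = \sum_i \ell_i(a_1,\dots,a_n) t^{2i}$. The $\bZ[\tfrac{1}{(2i+1)!}]$-integrality follows from the integral refinement of $f$ modulo $t^{2i+1}$ established in Section \ref{sec:recoll-mathc-class}.

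For the converse, $H^2(BC_p;\bZ) = (\bZ/p) \cdot c$, so each given class is of the form $a_j = k_j \cdot c$ for some integer $k_j$ (well-defined modulo $p$). Define $\rho$ to be the direct sum of the realifications of the characters $\chi^{k_j}$; the chosen complex structures orient $\rho$ as a representation into $\SO(2n)$, and the forward computation applied to this $\rho$ recovers the prescribed $a_j$.

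The argument presents no serious obstacle: it is essentially the classification of real representations of cyclic groups combined with multiplicativity of characteristic classes and the standard formulas $e(L) = c_1(L)$ and $\cL(L) = f(c_1(L))$ for realified complex line bundles. The only points requiring care are confirming that the decomposition lands in $\SO(2n)$ (ensured by the complex orientations chosen on each two-plane) and that the integrality statement for $\cL_i$ is preserved, which is handled by the explicit integral refinement of $f$ recalled in the previous subsection.
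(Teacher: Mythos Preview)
Your proposal is correct and follows essentially the same approach as the paper: decompose $\rho$ into a sum of realifications of complex characters $\chi^{k_j}$, set $a_j = c_1(\chi^{k_j})$, and invoke multiplicativity of $e$ and $\cL$ together with $\cL(L) = f(c_1(L))$. The paper's proof is terser---it simply asserts the existence of an oriented splitting $\rho \cong \rho_1 \oplus \cdots \oplus \rho_n$ with $\rho_j : C_p \to \UU(1)$---whereas you spell out the underlying representation-theoretic classification and the pairing of one-dimensional summands; but the content is the same.
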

\begin{proof}
  Given $\rho$, there exists an isomorphism of oriented representations
  \begin{equation*}
    \rho \cong \rho_1 \oplus \dots \oplus \rho_n
  \end{equation*}
  for homomorphisms $\rho_j: C_p \to \UU(1) = \SO(2)$. Setting $a_j := c_1(\rho_j)$, we have
  $$e(\rho) = \prod_{j=1}^n e(\rho_j) = \prod_{j = 1}^n a_j.$$
Furthermore, the $\bZ[\tfrac{1}{(2i+1)!}]$-integral refinement of $\cL_i$ evaluated on the sum $\rho_1 \oplus \dots \oplus \rho_n$ of complex line bundles is by definition $\ell_i(c_1(\rho_1), \dots, c_1(\rho_n)) = \ell_i(a_1, \ldots, a_n)$.
  
For the converse, the class $r \cdot c \in H^2(BC_p;\bZ)$ corresponds to the 1-dimensional complex representation $\chi^{r}$ given by the $r$th tensor power of $\chi$, so one takes the corresponding sum of such.
\end{proof}

Since we have $n$ degrees of freedom in choosing the $a_j$ we expect, roughly speaking, that for linear representations the classes $\cL_i$ with $i > n$ are expressible as polynomials in $\cL_1, \dots, \cL_n$. In contrast we shall show that for \emph{non-linear} representations $\rho: C_p \to \Top(2n)$ the $L$-classes are largely unconstrained, at least in a range of degrees.
The precise statement is as follows.
\begin{proposition}\label{prop:NonLinRep}
  Let $p$ be an odd prime and $n \geq 2$ an integer. Given any representation $\rho: C_p \to \SO(2n)$ having no trivial subrepresentations, and any tuple of elements
  \begin{equation*}
    x_i  \in H^{4i}(BC_p;\bZ), \quad\text{for $\lceil\tfrac n2\rceil \leq i \leq \tfrac{p-3}2$},
  \end{equation*}
  there exists a map $B\phi : BC_p \to B\STop(2n)$, such that
  \begin{align*}
    (B\phi)^*e &= e(\rho)  \in H^{2n}(BC_p ; \bZ),\\
    (B\phi)^*\cL_j &= \cL_j(\rho)  \in H^{4j}(BC_p ; \bZ[\tfrac{1}{(2j+1)!}]), & \text{for $1 \leq j < \lceil\tfrac n2\rceil$},\\
    (B\phi)^*\cL_j &= x_j  \in H^{4j}(BC_p ; \bZ[\tfrac{1}{(2j+1)!}]) & \text{for $\lceil\tfrac n2\rceil \leq j \leq \tfrac{p-3}2$}.
  \end{align*}

  For $n \geq 3$ we can furthermore arrange that this map $B\phi$ is induced by a homomorphism $\phi: C_p \to \STop(2n)$, which extends to the Pr\"ufer group $C_{p^\infty}$ and factors through a homomorphism $C_{p^\infty} \to \Diff^+(S^{2n-1})$ as in~\eqref{eq:17}.
\end{proposition}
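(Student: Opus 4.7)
I would construct $\phi$ by perturbing the free smooth linear $C_p$-action on $S^{2n-1}$ coming from $\rho$. Since each character $\chi^k: C_p \to \UU(1)$ lifts to a character of $C_{p^\infty}$, the linear action extends tautologically to a smooth linear $C_{p^\infty}$-action, and by Proposition~\ref{prop:LinRep} its classifying map $BC_p \to B\STop(2n)$ already realises the Euler class $e(\rho)$ and the values $\cL_j(\rho)$ for \emph{all} $j$.  The task is therefore to build smooth $C_{p^\infty}$-equivariant non-linear perturbations that preserve $e$ and the $\cL_j$ for $j < \lceil n/2 \rceil$ but can realise arbitrary prescribed values of $\cL_j$ in the range $\lceil n/2 \rceil \leq j \leq (p-3)/2$.

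The dimensional range has the following source.  Any such $\phi$ has $(2n-1)$-manifold quotient $L_\phi = S^{2n-1}/\phi(C_p)$, and the classifying map $L_\phi \to BC_p$ is $(2n-2)$-connected.  Hence the restriction $H^{4j}(BC_p;\bZ_{(p)}) \to H^{4j}(L_\phi;\bZ_{(p)})$ is an isomorphism for $4j < 2n$, rigidifying $\cL_j$ in the lower range via the tangential homotopy type of $L_\phi$, and vanishes for $4j \geq 2n$, so that the higher $\cL_j$ are invisible to $L_\phi$ and can be modified.  The upper bound $(p-3)/2$ is dictated by the denominator $(2j+1)!$ of the integral refinement of $\cL_j$ being a unit modulo $p$ precisely in this range.

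To realise the perturbations I would use equivariant connected sum at a free orbit with smooth $C_{p^\infty}$-equivariant homotopy spheres, built by equivariant plumbing of disc bundles of complex $C_{p^\infty}$-equivariant vector bundles over spheres of appropriate dimension, with equivariant twists encoded by characters $\chi^r$ of $C_{p^\infty}$.  For each degree $4j$ in the prescribed range, a suitable plumbing should produce a $C_{p^\infty}$-equivariant homotopy sphere whose effect on $\cL_j$ of the bundle over $BC_p$ can be computed via the $G$-signature formula applied to the plumbed manifold; choosing $r$ coprime to $p$ should realise a generator of $H^{4j}(BC_p;\bZ_{(p)})/p \cong \bZ/p$.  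The main obstacle will be arranging that plumbings at different degrees assemble into a single coherent smooth $C_{p^\infty}$-equivariant action: this should proceed by induction on $j$, exploiting that a plumbing at degree $4j$ leaves $\cL_k$ unchanged for $k < j$ (by the degree-range argument of the previous paragraph), so that the $x_j$ can be prescribed one at a time from low to high $j$, with unwanted higher-degree side effects corrected at later stages.
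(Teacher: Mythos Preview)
Your overall direction—perturb the free linear action on $S(V_s)$ and compute the change in $\cL_j$ via a signature-type formula—matches the paper. But the specific mechanism has a gap: ``equivariant connected sum at a free orbit'' with another free $C_p$-sphere $\Sigma^{2n-1}$ has quotient $(S(V_s)/C_p) \# (\Sigma/C_p)$, whose fundamental group is $C_p * C_p$, so its $C_p$-cover is not simply connected and hence not a sphere; if instead $\Sigma$ has a fixed point, there is none on the free linear side to match. Thus this step does not produce a new free $C_p$-action on $S^{2n-1}$, and the inductive ``one $x_j$ at a time'' scheme built on it cannot start. The plumbing construction is also left unspecified, and you would separately need to arrange that its boundary is the \emph{standard} sphere rather than an exotic one.

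What the paper does instead is Wall realisation: attach equivariant $n$-handles to $[0,1] \times S(V_s)$ so as to realise a chosen element of $L_{2n}(\bZ[C_p])$ with multisignature $4\xi$, for any $\xi \in R(C_p)$ of virtual dimension $0$ satisfying $\overline{\xi} = (-1)^n\xi$, arranging zero signature/Arf so that the outgoing boundary $\partial W \cong S^{2n-1}$. Coning off gives a closed $C_p$-manifold $\widehat{W}$ with exactly two fixed points; combining the family signature theorem with equivariant localisation then yields the closed formula $\cL_j(\phi) = \cL_j(V_s) - 2^{2+2j-n}\,e(V_s)\smile\ch_{2j-n}(\xi)$. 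This gives the low-degree rigidity immediately (the Chern character term lives in negative degree for $2j<n$) and replaces your induction by a single choice of $\xi$: a Vandermonde-determinant computation shows that $\xi \mapsto (\ch_0(\xi),\ldots,\ch_{p-1}(\xi))$ surjects $R(C_p)$ onto $\prod_{j=0}^{p-1} H^{2j}(BC_p;\bF_p)$, so all the prescribed $x_j$ are realised simultaneously.
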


\begin{remark}
  The homomorphisms $C_p \to \Diff^+(S^{2n-1})$ we will construct in the proof of this proposition will in fact define \emph{free} actions by diffeomorphisms.  The related question of free actions by homeomorphisms is discussed in \cite[Section 14E]{wallscm} by related methods.

  The extension to an action of the Pr\"ufer group is not strictly necessary for proving any of the results stated in the introduction, but may have independent interest and is proved in Section~\ref{sec:surg-exact-sequ}.  See also Remark~\ref{rem:prufer} below.
\end{remark}

\subsection{Proof of Theorem~\ref{thm:low-dimensional}, assuming Proposition~\ref{prop:NonLinRep}}

Let $P_i \in \bQ[x_1, \dots, x_i]$ be the unique polynomials such that
\begin{equation*}
  p_i = P_i(\cL_1, \dots, \cL_i) \in H^*(B\OO;\bQ) = H^*(B\Top;\bQ),
\end{equation*}
Write $k = \lceil \frac n2\rceil$ and consider the ring homomorphisms
\begin{equation}\label{eq:11}
  \bQ[e, p_1, p_2, \dots] \lra \bQ[e, x_1, x_2, \dots] \lra \bQ[a_1, \dots, a_n, x_k, x_{k+1}, \dots],
\end{equation}
where the first is the isomorphism given by $e \mapsto e$ and $p_i \mapsto P_i(x_1, \dots, x_i)$ and the second is given by
\begin{align*}
  e & \longmapsto \sigma_n(a_1, \dots, a_n) = a_1 \cdots a_n\\
  x_i &\longmapsto \ell_i(a_1, \dots, a_n) \quad \text{for $i < k$}\\
  x_i & \longmapsto x_i \quad \text{for $i \geq k$}.
\end{align*}
The symmetric polynomial $\ell_i = \ell_i(a_1, \dots, a_n)$ can be written in terms of elementary symmetric polynomials $\sigma_{2i}, \dots, \sigma_2$ by the same formula as~\eqref{eq:10} above, namely
\begin{equation*}
  \ell_i = \frac{2^{2i}(2^{2i-1}-1) |B_{2i}|}{(2i)!} \sigma_{2i} + \text{polynomial in $\sigma_{2j}$'s with $j < i$}.
\end{equation*}
Since $2k-2 < n$ and since the elementary symmetric polynomials $\sigma_1, \dots, \sigma_n$ are algebraically independent, the symmetric polynomials $\ell_1, \dots, \ell_{k-1}, \sigma_n$ are also algebraically independent, so the second homomorphism in~\eqref{eq:11} is injective.

Now let $\Xi$ be a non-zero element in the domain of the map
  \begin{equation}\label{eq:15}
    \bQ[e, p_1, p_2, \dots] \lra H^*(B\STop(2n);\bZ) \otimes \bQ
  \end{equation}
 of homogeneous degree $2r$.  Then the image of $\Xi$ under the composition~\eqref{eq:11} is also a non-zero polynomial, and by abuse of notation we write $\Xi(e, x_1, x_2, \dots,)$ and $\Xi(a_1, \dots, a_n, x_k, x_{k+1}, \dots)$ for the corresponding polynomials.  These only involve finitely many variables and finitely many denominators, so there are non-zero polynomials
\begin{align*}
  \Xi(e, x_1, x_2, \dots,x_m) &\in \bZ[\tfrac1{N!}][e, x_1, \dots, x_m]\\
  \Xi(a_1,\dots, a_n, x_k, \dots,x_m) &\in \bZ[\tfrac1{N!}][a_1, \dots, a_n,x_k, \dots, x_m]
\end{align*}
for some $m \geq k = \lceil\frac n2\rceil$ and some $N \geq 2m+1$.  The latter condition also implies that we have made unambiguous sense of the classes $\cL_1, \dots, \cL_m \in H^*(B\Top(2n);\bZ) \otimes \bZ[\frac1{N!}]$, and hence of the element
\begin{equation}\label{eq:12}
  \Xi(e, \cL_1, \dots, \cL_m) \in H^{2r}(B\STop(2n);\bZ) \otimes \bZ[\tfrac1{N!}].
\end{equation}
As explained in Section \ref{sec:finit-low-dimens}, to show that the image of $\Xi$ under~\eqref{eq:15} is non-zero, it suffices to show that the image of $\Xi(e, \cL_1, \dots, \cL_m)$ in $H^{2r}(B\STop(2n);\bF_p)$ is non-zero for infinitely many $p > N$.

We now claim that, after possibly increasing $N$, there exist units
\begin{equation*}
  \bar{a}_1,\ldots,\bar{a}_n, \bar{x}_k, \ldots, \bar{x}_m \in \bZ[\tfrac1{N!}]^\times
\end{equation*}
such that
\begin{equation*}
  \Xi(\bar{a}_1, \dots, \bar{a}_n, \bar{x}_k, \dots, \bar{x}_m) \in \bZ[\tfrac1{N!}]^\times.
\end{equation*}
To see this, let us temporarily use the same notation $\Xi: \bR^{n+m-(k-1)} \to \bR$ for the continuous map defined by the polynomial, which by our assumption that $\Xi \neq 0$ as an abstract polynomial is not the constant map zero.
The subset $(\bR\setminus\{0\})^{n+m-(k-1)} \subset \bR^{n+m-(k-1)}$ is open and dense, so its intersection with the non-empty open set $\Xi^{-1}(\R \setminus \{0\})$ remains non-empty and open.  By density of the rational numbers, we may choose an element
\begin{equation*}
  z = (\bar{a}_1, \dots, \bar{a}_n, \bar{x}_k, \dots, \bar{x}_m) \in (\bQ\setminus \{0\})^{n+m-(k-1)} \cap \Xi^{-1}(\R \setminus \{0\}).
\end{equation*}
After increasing $N$ if necessary, all coordinates of $z$, as well as the value $\Xi(z)$, will be units of $\bZ[\frac1{N!}]$.

For any prime $p > N$ we can then define $\rho := \chi^{\bar{a}_1} \oplus\dots \oplus \chi^{\bar{a}_n}: C_p \to \SO(2n)$. Applying Proposition~\ref{prop:NonLinRep} gives a map $B\phi: BC_p \to B\STop(2n)$, such that
\begin{align*}
  (B\phi)^*e &= \bar{a}_1 \cdots \bar{a}_n \cdot c^n\\
    (B\phi)^*\cL_i &= \ell_i(\bar{a}_i, \ldots, \bar{a}_n) \cdot c^{2i} \quad\text{for $1 \leq i < k$}\\
  (B\phi)^*\cL_i &= \bar{x}_i \cdot c^{2i} \quad\text{for $k \leq i \leq m$}
\end{align*}
in $H^*(BC_p ; \bZ[\tfrac{1}{N!}])$ and hence in $H^*(BC_p ; \bF_p)$, since $p > N$. Pulling back the class~\eqref{eq:12} along $B\phi$ we now get
\begin{multline*}
  \Xi(\bar{a}_1 c, \dots, \bar{a}_n c, \bar{x}_k c^{4k} , \dots, \bar{x}_m c^{4m}) = \Xi(\bar{a}_1, \dots, \bar{a}_n , \bar{x}_k, \dots, \bar{x}_m) c^{2r}\neq 0 \\ \in H^{2r}(BC_p;\bF_p).
\end{multline*}
This holds for all primes $p > N$, which as explained in Sections \ref{sec:strategy} and \ref{sec:finit-low-dimens} proves injectivity of the map~\eqref{eq:15}.

The statement about $B\Top(2n)$ in Theorem \ref{thm:low-dimensional} follows by taking invariants for the involution given by orientation reversal. The statement about $B\Top(2n+1)$ in Theorem \ref{thm:low-dimensional} follows by using the map $B\Top(2n) \to B\Top(2n+1)$, which pulls back the class $E$ to $e^2$. The final part of Theorem \ref{thm:low-dimensional} follows by applying the same argument to $B\Diff^+(S^{2n-1})$ for $n \geq 3$, in which case the detecting maps $B\phi$ factor through that space as asserted in the last part of Proposition~\ref{prop:NonLinRep}.
\qed

\begin{remark}\label{rem:prufer}
  For $n \geq 3$ it is possible to deduce non-vanishing in characteristic 0 from non-vanishing in characteristic $p$ using just a single prime $p > N$ instead of all $p > N$, in the notation of the above proof: any non-zero polynomial $\Xi$ is detected in $H^*(BC_{p^\infty};\bZ) \otimes \bQ \cong H^*(\bC P^\infty;\bQ_p)$ for some action of a Pr\"ufer group $C_{p^\infty}$ on $S^{2n-1}$.

Instead of the ``cohomology theory'' $X \mapsto H^*(X;\prod \bF_p) \otimes_{\prod \bF_p} L$ from Remark~\ref{rem:WeirdCoefficients} we can use $X \mapsto H^*(X;\bZ_p) \otimes_{\bZ_p} \bQ_p$, for which the canonical map $BC_{p^\infty} \to \bC P^\infty$ induces an isomorphism
    \begin{equation*}
      H^{2r}(\bC P^\infty;\bQ_p) = H^{2r}(\bC P^\infty;\bZ_p) \otimes_{\bZ_p} \bQ_p \lra H^{2r}(BC_{p^\infty};\bZ_p) \otimes_{\bZ_p} \bQ_p
    \end{equation*}
    analogous to~\eqref{eq:19}.  Since the map $H^{2r}(BC_{p^\infty};\bZ_p) \to H^{2r}(BC_{p^\infty};\bZ_p) \otimes \bQ_p$ can be identified with $\bZ_p \to \bQ_p$, a class in $H^{2r}(BC_{p^\infty};\bZ_p)$ has non-vanishing image in $H^{2r}(BC_{p^\infty};\bZ_p) \otimes \bQ_p \cong H^{2r}(\bC P^\infty;\bQ_p)$ provided it has non-vanishing image in $H^{2r}(BC_p;\bF_p)$, which is what we verified in the proof above.
  \end{remark}

\section{Signatures of topological manifolds}

In the next section we will use continuous actions of $C_p$ on closed oriented manifolds to prove Proposition~\ref{prop:NonLinRep}.  The $C_p$-manifolds that we will construct will have precisely two fixed points, $s$ and $t$, and give rise to an oriented fibre bundle
\begin{equation*}
  \pi: E \lra BC_p,
\end{equation*}
with two sections $s, t: BC_p \to E$ corresponding to the fixed points of the action.  The fiberwise tangent microbundle $T_\pi E$ is classified by a map $E \to B\STop(2n)$, and the composition
\begin{equation*}
  BC_p \overset{t}\lra E \overset{T_\pi E}\lra B\STop(2n),
\end{equation*}
which classifies $t^*T_\pi E$, will be used to detect classes in $B\STop(2n)$.

To evaluate $\cL_i(t^* T_\pi E)$ we evaluate the fibre integral
\begin{equation*}
  \int_\pi \cL_i(T_\pi E) \in H^{4i-2n}(BC_p ; \bZ[\tfrac{1}{(2i+1)!}])
\end{equation*}
in two ways: on the one hand, a suitable form of the family signature theorem (see \cite{RWSign} and the recollections below) gives a formula for this class in terms of the action of $C_p$ on cohomology of the fibres of $\pi$, and on the other hand it can be evaluated explicitly using equivariant localisation.  The upshot will be a formula which can be solved for $\cL_i(t^*T_\pi E)$ and used to prove Proposition~\ref{prop:NonLinRep}.

\subsection{The family signature theorem}

Let us summarise the family signature theorem for topological bundles as presented in \cite{RWSign} in the form that we will need it (in op.\ cit.\ it is developed in the generality of topological \emph{block} bundles, but we will only need the fibre bundle case).
  To an oriented fibre bundle $\pi: E \to B$ whose fibres are closed topological $2n$-manifolds, the family signature theorem that we will need can be stated as the formula
\begin{equation}\label{eq:1}
  \int_\pi \mathcal{L}_i(T_\pi E) = 2^{2i-n} \mathrm{ch}_{2i-n}(\Sign(\pi)) \in H^{4i-2n}(B;\bZ[\tfrac1{(2i+1)!}]),
\end{equation}
where $\int_\pi: H^{4i}(E) \to H^{4i-2n}(B)$ denotes the fibre integration homomorphism, $T_\pi E$ is the fiberwise tangent (micro)bundle, $\mathrm{ch}_{2i}$ is the Chern character, and $\Sign(\pi) \in K^0(B)$ is a complex $K$-theory class defined as follows (see \cite[Section 2.2]{RWSign} for more details: this agrees with the class called $\xi$ there).

\begin{definition}\label{defn:Sign}
  Writing $E_b = \pi^{-1}(b)$, the vector spaces $H^n(E_b;\bR)$ are the fibres of a real vector bundle $\mathcal{H}$ on $B$, on which we may choose an inner product.  The intersection pairing gives rise to a fiberwise linear automorphism $A$ of $\mathcal{H}$, defined by $\langle x, Ay\rangle = (x \cup y)([E_b])$ for $x, y \in H^n(E_b;\bR)$.
  For odd $n$ this operator is skew self-adjoint, and the endomorphism $J = A/\sqrt{A^* A}$ defines a complex structure on $\mathcal{H}$ and we set $\Sign(\pi) = (\mathcal{H},-J) - (\mathcal{H},J) \in K^0(B)$.

  For even $n$ the operator $A$ is self-adjoint so we obtain a decomposition $\mathcal{H} = \mathcal{H}^+ \oplus \mathcal{H}^-$ by eigenspaces of $A/\sqrt{A^* A}$.  In this case we set $\Sign(\pi) = (\mathcal{H}^+ - \mathcal{H}^-) \otimes \bC \in K^0(B)$.
\end{definition}

\begin{remark}
  If $B = BG$ for a finite group $G$ (we will use $G = C_p$) and the bundle arises from a continuous action of $G$ on a manifold $W$, then the bundle $\mathcal{H}$ on $BG$ arises from the real representation of $G$ given by the action on $H^n(W;\R)$.  Furthermore, the class $\Sign(\pi)$ admits an evident lift along the usual completion map $R(G) \to K^0(BG)$, defined using a $G$-invariant real inner product on $H^n(W;\R)$.  By abuse of notation we also denote this class
  \begin{equation*}
    \Sign(H^n(W;\R)) \in R(G),
  \end{equation*}
  but we emphasise that it depends on both the $G$-action on $H^n(W;\R)$ and the $G$-equivariant intersection form on this vector space.
\end{remark}
The fact that formula~\eqref{eq:1} holds rationally is essentially due to Meyer, but with the correct integral refinements of the Hirzebruch classes it also holds over $\bZ[\frac1{(2i+1)!}]$ as stated, see \cite[Section 6.4]{RWSign}.

\section{Cyclic group actions on topological manifolds}

In this section we state the main existence result about manifolds with cyclic group actions with certain properties, and explain how to deduce Proposition~\ref{prop:NonLinRep}.  The input data will be:

\begin{enumerate}[(i)]
\item an odd prime $p$ and an integer $n \geq 2$,
\item a $2n$-dimensional oriented orthogonal representation $V_s$ with $V_s^{C_p} = 0$,
\item a zero-dimensional virtual complex representation $\xi \in R(C_p)$  satisfying $\overline{\xi} = (-1)^n \xi$, where $\overline{\xi}$ is the complex conjugate.
\end{enumerate}

\begin{proposition}\label{prop:existence-of-enough-actions}
  Given the data above, there is a smooth compact $2n$-dimensional $C_p$-manifold $W$ and an equivariant immersion
    \begin{equation*}
      \phi: W \looparrowright V_s
    \end{equation*}
    such that
    \begin{enumerate}[(i)]
    \item $\phi$ restricts to a diffeomorphism in a neighborhood of $\phi^{-1}(0)$, 
    \item the boundary $\partial W$ is a homology sphere, with free $C_p$-action, and for $n \geq 3$ it is non-equivariantly diffeomorphic to $S^{2n-1}$, 
    \item\label{it:BdySphere} the resulting equivariant map
      \begin{equation*}
        \begin{aligned}
          \partial W \lra S(V_s)\\
          x \longmapsto \tfrac{\phi(x)}{|\phi(x)|}
        \end{aligned}
      \end{equation*}
      has degree 1, 
    \item  the closed $C_p$-manifold $\widehat{W} = W \cup_{\partial W} C(\partial W)$ (for $n = 2$ only a homology manifold) has
     \begin{equation*}
        \Sign(H^n(\widehat{W};\bR)) = 4 \cdot \xi \in R(C_p).
  \end{equation*}
    \end{enumerate}
\end{proposition}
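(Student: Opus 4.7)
The plan is to build $W$ by equivariant modifications of the ball $D(V_s)$. Start from $(W_0, \phi_0) := (D(V_s), \mathrm{incl})$, which already satisfies (i)--(iii) and has $\Sign(H^n(\widehat{W_0};\bR)) = 0$; then equivariantly boundary-connect-sum copies of auxiliary closed $2n$-dimensional $C_p$-manifolds whose signature characters sum to $4\xi$. These boundary sums are performed along small equivariant disks centered on free $C_p$-orbits in $V_s \setminus \{0\}$, so that condition (i) at the origin and condition (iii) at the boundary are unaffected. Additivity of $\Sign(H^n)$ under boundary sum is automatic, since the middle cohomology of $\widehat W$ decomposes as an orthogonal direct sum of the middle cohomologies of the summands.

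The auxiliary pieces are built, for instance, by equivariant plumbing of disk bundles $D(E_i) \to S^n$ of $C_p$-equivariant rank-$n$ real vector bundles over spheres with trivial $C_p$-action. When the combined intersection form is unimodular, the boundary of the plumbing is a $\bZ$-homology sphere, and when the fiber representations $E_i$ have no trivial summands the $C_p$-action on this boundary is free off the isolated fixed points at the zero sections. By choosing fibers of the form $\chi^k \oplus \chi^{-k}$ and varying the plumbing tree, one produces closed $C_p$-manifolds $\widehat P_{k,\epsilon}$ whose signature representations $\Sign(H^n(\widehat P_{k,\epsilon};\bR))$ span, rationally, the full subgroup $\{\xi \in R(C_p) : \bar\xi = (-1)^n \xi\}$. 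Clearing denominators --- which is where the factor of $4$ in $4\xi$ enters --- one writes $4\xi = \sum_\alpha m_\alpha \cdot \Sign(H^n(\widehat P_\alpha;\bR))$ and performs the corresponding boundary connected sums to produce $W$.

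For condition (ii), each plumbing piece has $\bZ$-homology sphere boundary with free $C_p$-action, and boundary connected sums of such manifolds are again homology spheres. For $n \geq 3$ one arranges the plumbings to have simply-connected boundary, so that $\partial W$ is simply connected as well, hence a homotopy sphere, and hence by Smale's $h$-cobordism theorem non-equivariantly diffeomorphic to $S^{2n-1}$. The immersion $\phi$ extends equivariantly over each plumbing region because the plumbings are equivariantly (stably) parallelisable; equivariant immersion theory provides an equivariant immersion of a fundamental domain of $P_\alpha$ (minus its fixed points) into a small tubular neighborhood of a free orbit in $V_s \setminus \{0\}$, which is then extended by the group action.

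The most delicate step is the signature-character calculation, namely verifying that the plumbings realise enough of the sublattice of $R(C_p)$ cut out by $\bar\xi = (-1)^n \xi$ with the factor of $4$ coming out correctly. The natural tool is the Atiyah--Singer $G$-signature theorem applied to the isolated $C_p$-fixed points of $\widehat P_\alpha$, expressing $\Sign(H^n(\widehat P_\alpha;\bR))$ as a sum of local contributions determined by the tangent weights; one then does a Vandermonde-type linear algebra check over the nontrivial characters of $C_p$ to show that integer combinations span the required sublattice up to the index $4$. A secondary technical point is preserving simple connectivity of $\partial W$ while keeping the $C_p$-action free off the fixed locus, which is standard but constrains the allowed plumbing graphs.
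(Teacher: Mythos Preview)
Your plumbing step does not work as written. With trivial $C_p$-action on the base $S^n$ and a representation $\rho_i$ on the fibre $E_i$, the plumbing identification $(x,y)\mapsto(y,x)$ between local charts $D^n_{\mathrm{base}} \times D^n_{\mathrm{fibre}}$ is equivariant only when each $\rho_i$ is trivial (equivariance forces $\rho_1(g)y=y$ for all $y$). Even a single disc bundle $D(E_i)\to S^n$ already has fixed set equal to the whole zero section $S^n$, not a finite set; and for odd $n$ with $p$ odd there is no rank-$n$ real $C_p$-representation without trivial summands at all. This is fatal for the immersion: any equivariant map $W \to V_s$ sends $W^{C_p}$ into $V_s^{C_p}=\{0\}$, and condition~(i) then forces $W^{C_p}$ to be a single point, so no piece with positive-dimensional fixed locus can be glued in. There is also a secondary gap: for $n\geq 3$, simple connectivity of a homology $(2n-1)$-sphere only gives a homotopy sphere; to conclude $\partial W \cong S^{2n-1}$ you must show $[\partial W]=0 \in \Theta_{2n-1}$, which requires controlling the signature ($n$ even) or Arf invariant ($n$ odd) of the bounding framed manifold, and your sketch does not do this.

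The paper sidesteps all of this by working on the quotient lens space $X = S(V_s)/C_p$. The computation of the multisignature $L_{2n}(\bZ[C_p]) \to R(C_p)$ due to Bak and Wall furnishes an algebraic quadratic form $(P,\lambda,\mu)$ with multisignature exactly $4\xi$, and one may further arrange that the underlying $\bZ$-form has vanishing signature or Arf invariant. Wall realisation (\cite[Theorem~5.8]{wallscm}) then produces a cobordism $M$, built by attaching $n$-handles to $[0,\tfrac12]\times X$, together with an $n$-connected immersion $M \looparrowright [0,1]\times X$ whose surgery kernel is $(P,\lambda,\mu)$. Passing to the universal $C_p$-cover $\widetilde M$ and setting $W = D(V_s)\cup_{S(V_s)}\widetilde M$ automatically gives a free $C_p$-action away from the origin (deck transformations), so the equivariant immersion into $V_s$ meets no fixed-point obstruction. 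The framing places $\partial W$ in $\mathrm{bP}_{2n}$, and the pre-arranged vanishing of signature or Arf invariant gives $[\partial W]=0$.
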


The fact that the map in~(\ref{it:BdySphere}) has degree 1 is an easy consequence of the first two properties, but we prefer to state it for emphasis: it implies that the actions of $C_p$ on the spheres $\partial W$ and $S(V_s)$ have equal Euler classes in $H^{2n}(BC_p;\bZ)$.

\subsection{Equivariant localisation}
\label{sec:equiv-local}

Postponing the construction of these actions until Section~\ref{sec:construction-actions}, let us look at some consequences.  We first consider the case $n \geq 3$, where $\partial W$ is diffeomorphic to $S^{2n-1}$, so $\widehat{W}$ is a topological manifold.

The action of $C_p$ on $\widehat{W}$ has precisely two fixed points, namely the (unique) inverse image of the origin in $V_s$ and the cone point of $C(\partial W) \subset \widehat{W}$.  We denote the former by $s$ and the latter by $t$, and consider the fibre bundle
\begin{equation*}
 \pi :  (EC_p \times \widehat{W})/C_p = E \lra BC_p
\end{equation*}
and the two sections given by $s$ and $t$.  The fiberwise normal bundle of $s$ can be identified with the vector bundle $(EC_p \times V_s)/C_p \to BC_p$, which has Euler class
\begin{equation*}
  e(V_s) \neq 0 \in H^{2n}(BC_p;\bZ).
\end{equation*}
(It is nonzero as $V_s$ has no trivial subrepresentations, by assumption.)  The fiberwise normal bundle of $t$ is the disk bundle $(EC_p \times C(\partial W))/C_p \to BC_p$.  From Proposition \ref{prop:existence-of-enough-actions} (\ref{it:BdySphere}) we can deduce a fiberwise homotopy equivalence between the sphere bundles of $V_t$ and $V_s$, but the orientation inherited on $\partial W$ as the boundary of $W$ is the opposite of the orientation inherited as the boundary of the disk $C(\partial W)$.  When the bundle $\pi$ is given the orientation arising from the orientation of $V_s$, and when we temporarily write $V_t$ for the fiberwise normal bundle of the section $t$ with its orientation induced from that of $\pi$, we therefore have
\begin{equation*}
  e(V_t)  = -e(V_s) \in H^{2n}(BC_p;\bZ).
\end{equation*}

For any class $\alpha \in H^k(E)$  the equivariant localisation formula \cite[Theorem 3.1.6]{AlldayPuppe} therefore implies
\begin{equation}\label{eq:3}
  e(V_s) \smile \int_\pi \alpha = s^*\alpha - t^*\alpha \in H^k(BC_p).
\end{equation}
In more detail, it suffices to prove~\eqref{eq:3} in the localised ring $H^*(BC_p)[e(V_s)^{-1}]$ and the cited reference then implies that it suffices to consider relative cohomology classes $\alpha \in H^k(E, E \setminus (s(BC_p) \cup t(BC_p)))$ since the $H^*(BC_p)$-module $H^*(E \setminus (s(BC_p) \cup t(BC_p)))$ vanishes after inverting the Euler class. Excision gives
$$H^k(E, E \setminus (s(BC_p) \cup t(BC_p))) \cong H^{k}(V_s, V_s \setminus s(BC_p)) \oplus H^{k}(V_t, V_t \setminus t(BC_p)),$$
so such classes can in turn be written as $\alpha = \alpha_s \smile u_s + \alpha_t \smile u_t$ where $u_s \in H^{2n}(V_s, V_s \setminus s(BC_p))$ and $u_t \in H^{2n}(V_t, V_t \setminus t(BC_p))$ are the Thom classes associated to the orientation.  Fibre integrating a class of this form gives $\alpha_s + \alpha_t$, while the right hand side of~\eqref{eq:3} gives $\alpha_s \smile e(V_s) - \alpha_t \smile e(V_t) = e(V_s) \smile (\alpha_s + \alpha_t)$.

Assuming that $p > 2i+1$ and specialising to $\bZ[\tfrac{1}{(2i+1)!}]$-coefficients, we can set $\alpha = \cL_i(T_\pi E)$ in~\eqref{eq:3}.  Combining with the family signature formula, we obtain
\begin{align*}
  e(V_s) \smile 2^{2i-n}\ch_{2i-n}(4\xi) &= e(V_s) \smile 2^{2i-n}\ch_{2i-n}(\Sign(H^n(\widehat{W};\bR))) \\
  &= e(V_s) \smile \int_\pi \cL_i(T_\pi E)\\
  &= \cL_i(s^* T_\pi E) - \cL_i(t^* T_\pi E),
\end{align*}
which we may write as
\begin{align}\label{eq:4}
\begin{split}
  \cL_i(t^* T_\pi E) = \cL_i(V_s) &-  2^{2+2i-n} e(V_s)\smile\ch_{2i-n}(\xi)\\
 &   \quad\quad\quad\quad\quad\quad\quad\quad \in H^{4i}(BC_p; \bZ[\tfrac{1}{(2i+1)!}]).
\end{split}
\end{align}

The euclidean bundle $t^* T_\pi E$ with its induced orientation is classified by a map $BC_p \to B\STop(2n)$, which arises from group homomorphisms
\begin{equation}\label{eq:14}
  C_p \lra \Diff^+(S^{2n-1}) \lra \STop(S^{2n-1}) \lra \STop(D^{2n}) \lra \STop(2n),
\end{equation}
defined using a choice of diffeomorphism $\partial W \approx S^{2n-1}$.  The pullback of the Hirzebruch classes along these homomorphisms (after taking classifying spaces) is then given by the formula~\eqref{eq:4}.

\subsection{Proof of Proposition~\ref{prop:NonLinRep} for $n \geq 3$, assuming Proposition~\ref{prop:existence-of-enough-actions}}

Applying the above calculation with the input data $V_s$ given by the homomorphism $\rho: C_p \to \SO(2n)$ in Proposition~\ref{prop:NonLinRep}, we obtain group homorphisms~\eqref{eq:14} classifying $t^* T_\pi E$.  Now, choose any orientation \emph{reversing} diffeomorphism of $\partial W \approx S^{2n-1}$ and let $\phi: C_p \to \Diff^+(S^{2n-1}) \to \STop(2n)$ be obtained by conjugating~\eqref{eq:14} by that element.  Then the equation $e(V_s) = -e(t^*(T_\pi E))$ becomes $e(\phi) = e(\rho)$.  Furthermore, the formula~\eqref{eq:4} implies that $\cL_j(\phi) = \cL_j(t^*T_\pi E) = \cL_j(V_s)$ when $2j < n$.  It remains to see that $\cL_j(t^* T_\pi E))$ can take arbitrary values when $n \leq 2j \leq p-3$.

  As $2$ is a unit modulo $p$ (as $p$ is odd), and cup product with $e(V_s)$ defines an isomorphism $H^{4j-2n}(BC_p;\bF_p) \to H^{4j}(BC_p;\bF_p)$ for $n \leq 2j$, by the formula~\eqref{eq:4} it suffices to see that the classes
  \begin{equation*}
    \ch_{2j-n}(\xi) \in H^{4j-2n}(BC_p;\bF_p), \quad \lceil \tfrac n2\rceil \leq j \leq \tfrac{p-3}2
  \end{equation*}
  can take arbitrary values as $\xi \in R(C_p)$ range over zero-dimensional virtual representations with $\overline{\xi} = (-1)^n \xi$.  By Lemma~\ref{lem:xyz} below, we may first find some zero-dimensional $\xi \in R(C_p)$ achieving this.  Replacing $\xi$ by
\begin{equation*}
  m(\xi + (-1)^n\overline{\xi}) \in R(C_p)
\end{equation*}
for some $m \in \bZ$ representing $2^{-1} \in \bF_p$ ensures that $\overline{\xi} = (-1)^n \xi$ without changing $\ch_{2i-n}(\xi)$, since $\mathrm{ch}_{2i-n}(m( \xi + (-1)^n\overline{\xi})) = m(\mathrm{ch}_{2i-n}(\xi) + (-1)^n (-1)^{2i-n} \mathrm{ch}_{2i-n}(\xi)) = 2m \mathrm{ch}_{2i-n}(\xi) = \mathrm{ch}_{2i-n}(\xi)$.  This finishes the proof of Proposition~\ref{prop:NonLinRep}.\qed
  
\begin{lemma}\label{lem:xyz}
  The homomorphism
  \begin{align*}
    R(C_p) &\lra \prod_{j=0}^{p-1} H^{2j}(BC_p;\bF_p)\\
    \xi & \longmapsto (\ch_0(\xi), \dots, \ch_{p-1}(\xi))
  \end{align*}
  is surjective.
\end{lemma}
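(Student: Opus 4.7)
The plan is to reduce the lemma to the invertibility of a Vandermonde matrix modulo $p$.

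First I would recall that $R(C_p)$ has a $\bZ$-basis consisting of the one-dimensional complex representations $\chi^0, \chi^1, \dots, \chi^{p-1}$, where $\chi: C_p \hookrightarrow \UU(1)$ is the tautological character with $c_1(\chi) = c \in H^2(BC_p;\bZ)$.  Since $c_1(\chi^r) = rc$, one has the identity
\begin{equation*}
  \ch(\chi^r) = e^{rc} = \sum_{j \geq 0} \frac{r^j}{j!} c^j
\end{equation*}
in $H^*(BC_p;\bQ)$, and hence $\ch_j(\chi^r) = \frac{r^j}{j!} c^j$.  For $0 \leq j \leq p-1$, the denominator $j!$ is a unit in $\bF_p$, so this formula makes unambiguous sense after reduction mod $p$.

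Next I would use that $H^{2j}(BC_p;\bF_p) = \bF_p \cdot c^j$ is one-dimensional for each $j$, so the target $\prod_{j=0}^{p-1} H^{2j}(BC_p;\bF_p)$ is a $p$-dimensional $\bF_p$-vector space with basis $\{c^0, c^1, \dots, c^{p-1}\}$.  Restricting the homomorphism of the lemma to the sublattice $\bigoplus_{r=0}^{p-1} \bZ \cdot [\chi^r] \subset R(C_p)$ and then reducing mod $p$, the resulting $\bF_p$-linear map $\bF_p^p \to \bF_p^p$ has matrix
\begin{equation*}
  M_{r,j} = \frac{r^j}{j!} \in \bF_p, \qquad 0 \leq r, j \leq p-1.
\end{equation*}

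Finally, rescaling the $j$-th column by the unit $j! \in \bF_p^\times$ turns $M$ into the Vandermonde matrix $(r^j)_{0 \leq r,j \leq p-1}$ associated to the $p$ distinct points $0, 1, \dots, p-1 \in \bF_p$.  Its determinant is $\prod_{0 \leq r < s \leq p-1}(s - r)$, which is a product of units in $\bF_p$ and hence nonzero.  Therefore $M$ is invertible over $\bF_p$, so the map in the lemma is already surjective when restricted to the finite subset $\{[\chi^0], \dots, [\chi^{p-1}]\}$.  There is no real obstacle here; the only thing to be careful about is the denominators $j!$, which is precisely why the range $j \leq p-1$ is the correct one. \qed
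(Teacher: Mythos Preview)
Your proof is correct and is essentially identical to the paper's own argument: both compute $\ch_j(\chi^r) = \tfrac{r^j}{j!} c^j$, write the resulting matrix as a diagonal scaling of the Vandermonde matrix on the nodes $0,1,\dots,p-1 \in \bF_p$, and conclude invertibility mod $p$ from the Vandermonde determinant. The only cosmetic difference is that the paper factors the matrix as $\mathrm{diag}(0!,\dots,(p-1)!)^{-1}$ times the Vandermonde, whereas you phrase it as a column rescaling.
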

\begin{proof}
  In terms of the standard representation $\chi: C_p \to \UU(1)$ we have an integral basis $1, \chi, \dots, \chi^{p-1}$ of $R(C_p)$ and a mod $p$ basis $1, c, \dots, c^{p-1}$ of the codomain, with $c = c_1(\chi) \in H^2(BC_p;\bF_p)$.  As $\ch_j(\chi^r) = \frac{(r \cdot c)^j}{j!}$ the matrix for the homomorphism with respect to these bases is given by the product
  \begin{equation*}
    \begin{pmatrix}
      0! & &&\\
       & 1! &\\
      & & \ddots\\
      &&& (p-1)!
    \end{pmatrix}^{-1}
    \begin{pmatrix}
      0^0 & 1^0 & 2^0 & \cdots & (p-1)^0\\
      0^1 &1^1 & 2^1 &\cdots & (p-1)^1\\
      \vdots & \vdots & \vdots & \ddots & \vdots\\
      0^{p-1} & 1^{p-1} & 2^{p-1} & \cdots & (p-1)^{p-1}
    \end{pmatrix},
  \end{equation*}
  which is invertible modulo $p$ (the second factor is invertible by the Vandermonde formula).
\end{proof}

\subsection{The case $n=2$}
The only step of the above argument which does not immediately apply is that the homology 3-sphere $\partial W$ is likely not simply connected, so the homology manifold obtained by coning it off need not be a manifold.  Instead we appeal to \cite[Proposition 11.1C]{FreedmanQuinn}, which gives a compact contractible 4-manifold $D$ with $C_p$-action, whose boundary is equivariantly homeomorphic to $\partial W$, and where the fixed points $D^{C_p}$ consists of a single point $t \in D$.  The cited result applies because $\partial W$ is a homology 3-sphere and the action is free.  We can therefore re-define $\widehat{W} = W \cup_{\partial W} D$ in this case, and otherwise proceed as before.

\section{Construction of actions}
\label{sec:construction-actions}

Finally, we prove Proposition~\ref{prop:existence-of-enough-actions}.  The proof has two steps: first the $\bC$-linear data given by $\xi \in R(C_p)$ is upgraded to $\bZ$-linear data, then this $\bZ$-linear data is realised geometrically as the intersection form of a manifold.  This is essentially an instance of ``Wall realisation'' \cite[Theorem 5.8]{wallscm}, but let us first summarise the necessary definitions and results.

\subsection{$L$-theory}
\label{sec:l-theory}

We recall some details from the definition of Wall's (simple) algebraic $L$-theory groups $L_{2n}(\bZ[C_p])$, to set notation. We refer to \cite[Chapter 5]{wallscm} for full details. Elements 
 are represented by isomorphism classes of tuples $(P,\lambda,\mu)$ where
\begin{enumerate}[(i)]
\item $P$ is a finitely-generated free right $\bZ[C_p]$-module with a chosen basis,
\item $\lambda: P \times P \to \bZ[C_p]$ is a simple unimodular $(-1)^n$-hermitian form,
\item $\mu: P \to \bZ[C_p] / \bZ\{g - (-1)^n g^{-1} \mid g \in C_p\}$ is a quadratic refinement of $\lambda$.
\end{enumerate}
Unimodularity means that the adjoint $\bZ[C_p]$-linear homomorphism
\begin{equation*}
  A\lambda: P \lra \mathrm{Hom}_{\bZ[C_p]}(P,\bZ[C_p])
\end{equation*}
is an isomorphism; simple unimodularity means that the matrix of $A\lambda$ with respect to the given basis and its dual, represents the trivial element of the Whitehead group $\mathrm{Wh}(C_p)$.  We shall not need the explicit relations between these generators $[(P,\lambda,\mu)]$.

There is an associated $C_p$-invariant unimodular $(-1)^n$-symmetric form $b: P \times P \to \bZ$, given by the coefficient of $1 \in \bZ[C_p]$ in $\lambda$, equipped with a $C_p$-invariant quadratic refinement $q : P \to \bZ/(1 - (-1)^n)\bZ$ given by the coefficient of $1 \in \bZ[C_p]$ in $\mu$.

The multisignature is a homomorphism
\begin{equation}\label{eq:17x}
 \Sign : L_{2n}(\bZ[C_p]) \lra R(C_p) 
\end{equation}
defined in analogy to the construction in Definition \ref{defn:Sign}.  Explicitly, choose a $C_p$-invariant (positive definite) inner product on $P_\bR = \bR \otimes_\bZ P$ and write the $\bR$-bilinear extension $b_\bR$ of $b$ as $b_\bR(x,y) = \langle x, Ay\rangle$ for a $\bR[C_p]$-linear operator $A: P_\bR \to P_\bR$.  For odd $n$ this operator is skew self-adjoint, and the endomorphism $J = A/\sqrt{A^* A}$ defines a complex structure on $\mathcal{H}$ and we set $\Sign(P,\lambda,\mu) := (P_\bR,-J) - (P_\bR,J) \in R(C_p)$. For even $n$ the operator $A$ is self-adjoint so we obtain a decomposition $P_\bR = P_\bR^+ \oplus P_\bR^-$ by eigenspaces of $A/\sqrt{A^* A}$.  In this case we set $\Sign(P,\lambda,\mu) := (P_\bR^+ - P_\bR^-) \otimes \bC \in R(C_p)$.

The representation ring $R(C_p)$ is spanned as a $\bZ$-module by tensor powers of the 1-dimensional representation $\chi : C_p \to \UU(1)$  corresponding to the inclusion.
Bak \cite[Theorem 3]{BakLThy} and Wall \cite[Theorem 13A.4 (ii)]{wallscm} show that the image of $\Sign$ the span of $4(\chi^r + (-1)^n \chi^{-r})$ for $r = 0, \dots, \frac{p-1}2$.  Indeed, the representation ring has integral basis $\chi^{-(p-1)/2}, \dots, \chi^{(p-1)/2}$ and in the notation and language of the latter reference, the subgroup consisting of ``characters trivial on 1 (real or imaginary as appropriate)'' has basis $\chi^r + (-1)^n \chi^{-r}$ for $r = 1, \dots, \frac{p-1}2$.  In the splitting $L_{2n}(C_p) = L_{2n}(1) \oplus \tilde{L}_{2n}(C_p)$ mentioned in the cited Theorem, the image of the second summand in $R(C_p)$ therefore has basis $4(\chi^r + (-1)^n \chi^{-r})$ for $r = 1, \dots, \frac{p-1}2$.  The first summand $L_{2n}(1)$ is $\bZ/2$ when $n$ is odd and when $n$ is even it is $\bZ$, a generator of which is sent to $8 = 4(\chi^0 + \chi^0) \in R(C_p)$ .

Since the input datum $\xi \in R(C_p)$ is assumed to have virtual dimension 0 and to satisfy $\overline{\xi} = (-1)^n \xi$, we can write
\begin{equation*}
  \xi = \sum_{r=1}^{(p-1)/2} b_r\left((\chi^r + (-1)^n \chi^{-r}) - (\chi^0 + (-1)^n \chi^0)\right)
\end{equation*}
for some $b_r \in \bZ$, and hence there exists a $[(P,\lambda,\mu)] \in L_{2n}(\bZ[C_p])$ with $\Sign(P,\lambda,\mu) = 4\cdot \xi$.  For $n$ even, the fact that $\xi$ has virtual dimension 0 implies that the (usual) signature of the $\bR$-bilinear form $b_\bR$ is zero.  For $n$ odd, as the kernel of $\Sign$ is detected by the Arf invariant we can arrange that the quadratic refinement $q: P \to \bZ/2$ has Arf invariant zero.

\subsection{Wall realisation}
\label{sec:wall-realisation}

The final step in the proof of Proposition~\ref{prop:existence-of-enough-actions} is to realise $(P,\lambda, \mu)$ as the equivariant intersection form of a suitable $C_p$-manifold $W$.

To this end, let $X := S(V_s)/C_p$ denote the $(2n-1)$-dimensional lens space associated to the representation $V_s$, and let $(P, \lambda, \mu)$ be as above: namely it has multisignature $4 \cdot \xi$ and, if $n$ is odd, its associated quadratic form $q$ has Arf invariant 0.  We now claim that $(P,\lambda,\mu)$ can be represented geometrically by a $2n$-dimensional smooth cobordism $M$ equipped with an $n$-connected map
\begin{equation*}
  \phi:   M \lra [0,1] \times X,
\end{equation*}
which is also an immersion, restricts to a diffeomorphism $\phi_0: \partial_\mathrm{in} M \to \{0\} \times X$, and restricts to a simple homotopy equivalence $\partial_\mathrm{out}M \to [0,1] \times X$. By ``represented'' we mean that for any $M$ and $\phi$ with these properties, the $\bZ[C_p]$-module
\begin{equation}\label{eq:18}
  K_\phi := \pi_{n+1}(X, M) \cong H_{n+1}(X, M ; \bZ[C_p]) \cong H_n(M ;\bZ[C_p]),
\end{equation}
where the first isomorphism comes from the Hurewicz theorem and the second is because the universal cover of $X$ is $(2n-2)$-connected, can be given a geometrically defined intersection form $\lambda_\phi$ and self-intersection form $\mu_\phi$, and that it is possible to choose $M$ and $\phi$ such that there exists a $\bZ[C_p]$-linear isomorphisms $K_\phi \to P$ sending these to $\lambda$ and $\mu$. 

We define the geometric (self-)intersection forms $\lambda_\phi$ and $\mu_\phi$ associated to the $n$-connected immersion $\phi: M \to [0,1] \times X$ using \cite[Theorem 5.2]{wallscm}. To apply that theorem we must provide a map $K_\phi \to \pi_n(\mathrm{Fr}_n(TM))$ of right $\bZ[C_p]$-modules, to the $n$th homotopy group of the bundle of $n$-frames in $M$, so that by Smale--Hirsch theory elements of $K_\phi$ represent immersed based spheres in $M$. We do this as
$$\pi_{n+1}(X, M) \overset{\sim}\leftarrow \pi_{n+1}(\mathrm{Fr}_{2n}(\bR \oplus TX), \mathrm{Fr}_{2n}(TM)) \overset{\partial}\to \pi_n(\mathrm{Fr}_{2n}(TM)) \to \pi_n(\mathrm{Fr}_{n}(TM)),$$
where the left-hand map is an isomorphism because the evident square involving these four spaces is homotopy cartesian. (As the map just constructed factors through the $2n$th frame bundle, the corresponding immersions will have trivial normal bundle, and hence the ``normal Euler number'' term in \cite[Theorem 5.2 (iii)]{wallscm} will vanish.)

\begin{remark}
Constructing $\lambda_\phi$ and $\mu_\phi$ from $\phi: M \to [0,1] \times X$ in this way is essentially identical to the definition in \cite[Chapter 5]{wallscm} of the quadratic form on the surgery kernel, except that we have replaced the notion of ``degree 1 normal map'' in op.\ cit.,\ which is extra data on a map, with the geometric \emph{condition} that $\phi$ be an immersion.  Readers familiar with the discussion in \cite[Chapter 5]{wallscm} will see how to extract a ``degree 1 normal map'' $M \to [0,1] \times X$ from the immersion $\phi$: first extract a bundle map $TM \to \bR \oplus TX$ from the derivative $D\phi$, then homotope $\phi: M \to [0,1] \times X$ relative to $\partial_\mathrm{in} M$ to a map sending $\partial_{\mathrm{out}}$ to $\{1\} \times X$ and lift to a covering homotopy of bundle maps $TM \to \bR \oplus TX$.
\end{remark}

The cobordism $M$ and its immersion into $[0,1] \times X$ is constructed by attaching $n$-handles to the trivial cobordism $[0,\tfrac{1}{2}] \times X$, one for each of the given basis elements in the free $\bZ[C_p]$-module $P$.  As explained in the proof of \cite[Theorem 5.8]{wallscm}, such immersed handle attachments can realise any $(\lambda,\mu)$ as intersection and self-intersection forms, and the fact that $\lambda$ is unimodular implies that the restriction $\phi\vert_{\partial_\mathrm{out} M}$ induces an isomorphism on homology with $\bZ[C_p]$-coefficients.  This restriction is also $(n-1)$-connected, so for $n \geq 3$ it is a homotopy equivalence by the Hurewicz theorem.

We can model the universal cover of $[0,1] \times X$ as the map
\begin{align*}
  2 D(V_s) \setminus \mathring{D}(V_s) & \lra [0,1] \times X\\
  v \quad\quad& \longmapsto \big(\|v\|-1,\big[\tfrac{v}{\|v\|}\big]\big),
\end{align*}
and let $\widetilde{M} = (2D(V_s) \setminus \mathring{D}(V_s)) \times_{([0,1] \times X)} M$ be the pullback of $\phi$.  This $C_p$-cobordism $\widetilde{M}$ inherits an equivariant immersion
\begin{equation*}
  \widetilde{\phi}: \widetilde{M} \lra 2D(V_s) \setminus \mathring{D}(V_s)
\end{equation*}
which restricts to an equivariant diffeomorphism $\partial_\mathrm{in} \widetilde{M} \to S(V_s)$.  For $n \geq 3$ the restriction to $\partial_{\mathrm{out}} \widetilde{M}$ is a homotopy equivalence, while for $n=2$ it is only an integral homology equivalence.  The glued $C_p$-manifold
\begin{equation*}
  W = D(V_s) \cup_{S(V_s)} \widetilde{M}
\end{equation*}
inherits an equivariant immersion into $V_s$ which is a diffeomorphism over $D(V_s)$, and the surgery kernel~\eqref{eq:18} may be identified with $H_n(W;\bZ)$, where the latter is given the $\bZ[C_p]$-module structure arising from the geometric action.  Indeed, the surgery kernel~\eqref{eq:18} maps isomorphically to the relative homology
\begin{equation*}
  H_n(M,\partial_\mathrm{in} M;\bZ[C_p]) \cong H_n(\widetilde{M},\partial_\mathrm{in}\widetilde{M}) \stackrel{\cong}{\lra} H_n(W,D(V_s);\bZ) \stackrel{\cong}{\longleftarrow} H_n(W;\bZ),
\end{equation*}
where the first arrow is an isomorphism by excision and the second is an isomorphism by the long exact sequence of the pair.  The $\bZ[C_p]$-module structure on the surgery kernel corresponds to the action on $H_n(W;\bZ)$ induced from the action on $W$.  Furthermore, the $\bZ$-bilinear intersection form on $H_n(W;\bZ)$ is invariant under the $C_p$ action, and is identified with the form previously denoted $b: P \times P \to \bZ$, the coefficient of $1 \in \bZ[C_p]$ in $\lambda$.  Similarly, the quadratic refinement $q: P \to \bZ/(1 - (-1)^n) \bZ$ is identified with the self-intersection form on $H_n(W;\bZ)$, defined using the framing arising from the immersion into $V_s$.

It only remains to prove that for $n \geq 3$ the homotopy sphere $\partial W$ is (non-equivariantly) diffeomorphic to $S^{2n-1}$.  The framing of $W$ already shows that it lies in the subgroup $\mathrm{bP}_{2n} \subset \Theta_{2n-1}$ in the Kervaire--Milnor classification \cite{kervairemilnor}.  To see that $\partial W$ represents $0 \in \mathrm{bP}_{2n}$ for even $n$, we have to determine the signature of the bounding framed manifold, but we have arranged that $b: P \times P \to \bZ$ has signature 0.  Similarly, for $n$ odd we have to examine the Arf invariant of the bounding framed manifold, but we have arranged that $q: P \to \bZ/2\bZ$ has Arf invariant 0.\qed

\section{Pr\"ufer group actions on spheres}
\label{sec:surg-exact-sequ}

We have finished the proof of all the results announced in the introduction, but it still remains to explain why the homomorphisms $C_p \to \Diff^+(S^{2n-1})$ extend to the Pr\"ufer group $C_{p^\infty}$, as claimed in Proposition~\ref{prop:NonLinRep} in the case $2n-1 \geq 5$.  In this section our presentation will assume familiarity with the (smooth) \emph{surgery exact sequence} as developed in \cite[Chapter 10]{wallscm}, and we first recast the construction from Section~\ref{sec:construction-actions} in that context.

\subsection{The surgery exact sequence}
\label{sec:surg-exact-sequ-1}

\newcommand{\lensspace}{X}  

In the setup from Section~\ref{sec:construction-actions}, the smooth surgery exact sequence for the lens space $\lensspace = S(V_s)/C_p$ looks like
\begin{equation*}
  L_{2n}(\bZ[C_p]) \overset{\partial}\lra \mathcal{S}(\lensspace) \overset{\eta}\lra [\lensspace,G/O] \overset{\sigma}\lra L_{2n-1}(\bZ[C_p])
\end{equation*}
whose outer terms are the (simple) quadratic $L$-theory
of the group ring $\bZ[C_p]$, whose third term is the so-called normal invariants, and whose second term $\mathcal{S}(\lensspace)$ is the \emph{structure set}, i.e.\ the set of $s$-cobordism classes of pairs $(M,f)$ where $M$ is a smooth manifold and $f: M \to \lensspace$ is a simple homotopy equivalence.  Exactness at $\mathcal{S}(\lensspace)$ means that the abelian group $L_{2n}(\bZ[C_p])$ acts on $\mathcal{S}(\lensspace)$ and that $\eta$ induces an injection from the set of orbits.  (In fact this injection is also a surjection because $L_{2n-1}(\bZ[C_p])=0$ by \cite{BakOdd}, but we shall not use that.)

The identity map $\iota: \lensspace \to \lensspace$ represents an element $\iota \in \mathcal{S}(\lensspace)$ and the homotopy equivalence $(\partial W)/C_p \to S(V_s)/C_p$ induced from Proposition~\ref{prop:existence-of-enough-actions} gives another element.  By construction these elements are normally bordant, and hence in the same orbit under the action of $L_{2n}(\bZ[C_p])$.

\subsection{Pr\"ufer group actions}
\label{sec:prufer-group-actions}

Choose an extension of the given linear representation $V_s$ to a representation of the Pr\"ufer group $C_{p^\infty}$, and write $\lensspace_k = S(V_s)/C_{p^k}$ for all $k \geq 1$.  Then we have an inverse system of sets
\begin{equation}\label{eq:21}
  \mathcal{S}(\lensspace_1) \longleftarrow \mathcal{S}(\lensspace_2) \longleftarrow \cdots,
\end{equation}
where the maps are given by passing to $p$-fold covers, and a corresponding inverse system of surgery exact sequences
\begin{equation*}
  L_{2n}(\bZ[C_{p^k}]) \overset{\partial}\lra \mathcal{S}(\lensspace_k) \overset{\eta}\lra [\lensspace_k,G/O] \overset{\sigma}\lra L_{2n-1}(\bZ[C_{p^k}]).
\end{equation*}
This inverse system is clear if the $L$-groups are described geometrically in terms of surgery problems, as in \cite[Chapter 9]{wallscm}, where the finite covering map $X_{k-1} \to X_{k}$ induces contravariant maps on $L$-groups by pullback of surgery problems, and on normal invariants by precomposition. Under the identification of the geometrically defined $L$-groups with the algebraic ones \cite[Corollary 9.4.1]{wallscm}, pullback of surgery problems yields maps 
$$\tau_k: L_{2n}(\bZ[C_{p^{k}}]) \lra L_{2n}(\bZ[C_{p^{k-1}}])$$
known as transfer maps. Unravelling definitions, this map sends the class represented by $(P, \lambda, \mu)$ to the class represented by $(P', \lambda', \mu')$ where $P'$ is $P$ considered as a right $\bZ[C_{p^{k-1}}]$-module by restriction along $\bZ[C_{p^{k-1}}] \to \bZ[C_{p^{k}}]$, and $\lambda'$ and $\mu'$ are obtained from $\lambda$ and $\mu$ by postcomposition with (the maps induced by) the trace map $\mathrm{Tr} : \bZ[C_{p^{k}}] \to \bZ[C_{p^{k-1}}]$ sending $g \in C_{p^{k-1}} \leq C_{p^{k}}$ to $g$, and $g \in C_{p^{k}} \setminus C_{p^{k-1}}$ to zero. In particular, the $C_{p^{k-1}}$-invariant unimodular $(-1)^n$-symmetric form $b' : P \times P \to \bZ$ extracted from $\lambda'$ agrees with the form $b$ extracted from $\lambda$. This shows that the multisignature map intertwines $\tau_k$ and the restriction map $R(C_{p^k}) \to R(C_{p^{k-1}})$ on complex representation rings.

The identity maps $(\iota_k: \lensspace_k \to \lensspace_k) \in \mathcal{S}(\lensspace_k)$ define a point in the inverse limit of~\eqref{eq:21}, and we look at the subsets
\begin{equation*}
  T_k := \{x \in \mathcal{S}(\lensspace_k) \mid x \sim \iota_k \},
\end{equation*}
where we have temporarily written $\sim$ to denote the equivalence relation that representatives are normally bordant by a bordism whose universal cover has vanishing signature (for $n$ even) or Arf invariant (for $n$ odd).  By the general surgery theory, two elements of the structure set have normally bordant representatives if and only if they are in the same orbit for the action of the $L$-groups, and this finer relation $\sim$ holds if and only if the representatives are in the same orbit for the action of the subgroups
\begin{equation}\label{eq:20}
  L'_{2n}(\bZ[C_{p^{k}}]) := \mathrm{Ker}\big(L_{2n}(\bZ[C_{p^{k}}]) \xrightarrow{\tau_1 \circ \dots \circ \tau_k} L_{2n}(\bZ) \big).
\end{equation}

The subsets $T_k \subset \mathcal{S}(\lensspace_k)$ also form an inverse system and in this language and notation, our proof of Propositions~\ref{prop:NonLinRep} and~\ref{prop:existence-of-enough-actions} amounted to constructing an element $((\partial W)/C_p \to \lensspace_1) \in T_1$ with the required properties.  The key observation is now the following.
\begin{lemma}
  The maps in the inverse system~\eqref{eq:21} restrict to surjections $T_{k} \to T_{k-1}$.
\end{lemma}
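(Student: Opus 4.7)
Given $y \in T_{k-1}$, by the characterisation of $T_{k-1}$ as the orbit of $\iota_{k-1}$ under $L'_{2n}(\bZ[C_{p^{k-1}}])$, write $y = \alpha \cdot \iota_{k-1}$ for some $\alpha$ in this subgroup. My plan is to produce $\beta \in L_{2n}(\bZ[C_{p^k}])$ with $\tau_k(\beta) = \alpha$; then $\beta$ automatically lies in $L'$ by~\eqref{eq:20}, and by the compatibility discussed next, $\beta \cdot \iota_k \in T_k$ will pull back to $y$, as required.

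The compatibility statement is that the $p$-fold cover map $\mathcal{S}(\lensspace_k) \to \mathcal{S}(\lensspace_{k-1})$ intertwines the $L$-theory actions via the transfer: concretely, the $p$-fold cover of $\beta \cdot \iota_k$ equals $\tau_k(\beta) \cdot \iota_{k-1}$. This is immediate from the geometric Wall realisation of Section~\ref{sec:wall-realisation}: the $n$-handles attached to $\lensspace_k \times [0,1]$ to realise $\beta$ pull back to $n$-handles on $\lensspace_{k-1} \times [0,1]$, and the surgery kernel~\eqref{eq:18} merely has its $\bZ[C_{p^k}]$-module structure restricted along $\bZ[C_{p^{k-1}}] \hookrightarrow \bZ[C_{p^k}]$, which is precisely the defining property of $\tau_k$ from Section~\ref{sec:surg-exact-sequ-1}.

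The remaining task is to show $\alpha \in \operatorname{Im}(\tau_k)$, which I would approach via the multisignature. As recalled in Section~\ref{sec:surg-exact-sequ-1}, $\Sign$ intertwines $\tau_k$ with the restriction $R(C_{p^k}) \to R(C_{p^{k-1}})$ along the group inclusion; the latter is surjective at the character level since each basis character $\chi_{k-1}^r$ lifts to $\chi_k^r$, and after replacing the lift $\xi'$ by $\xi' + (-1)^n \overline{\xi'}$ (as in the proof of Proposition~\ref{prop:NonLinRep}) one may arrange $\overline{\xi'} = (-1)^n \xi'$ without changing the restriction up to a known scalar. Combined with the Bak--Wall identification of $\operatorname{Im}(\Sign)$ and its generalisation to $C_{p^k}$, one lifts $\Sign(\alpha)$ to $\Sign(\beta')$ for some $\beta' \in L_{2n}(\bZ[C_{p^k}])$, so that $\tau_k(\beta')$ differs from $\alpha$ only by an element of $\ker(\Sign) \cap L'_{2n}(\bZ[C_{p^{k-1}}])$. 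The main obstacle I anticipate is correcting this residual discrepancy: for even $n$ the ordinary signature is already trivial on $L'$ (since the virtual dimension of the associated $\xi$ vanishes), so $\Sign$ is injective there and no further correction is needed; for odd $n$, the kernel is a $\bZ/2$ detected by the Arf invariant of the bounding framed manifold, which one kills by adjusting $\beta'$ by an appropriate multiple of the image of a generator of $L_{2n}(\bZ) = \bZ/2$ under the map $L_{2n}(\bZ) \to L_{2n}(\bZ[C_{p^k}])$, using that the composition $L_{2n}(\bZ) \to L_{2n}(\bZ[C_{p^k}]) \xrightarrow{\tau_1 \circ \cdots \circ \tau_k} L_{2n}(\bZ)$ is multiplication by $p^k$, a unit modulo $2$ for odd $p$.
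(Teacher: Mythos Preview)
Your approach is the same as the paper's: reduce to surjectivity of $\tau_k'\colon L'_{2n}(\bZ[C_{p^k}]) \to L'_{2n}(\bZ[C_{p^{k-1}}])$ and verify this via the multisignature and the surjectivity of the restriction $R(C_{p^k}) \to R(C_{p^{k-1}})$.

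The paper is more efficient on one point. You treat the odd $n$ case by a separate correction step, but this is unnecessary: the multisignature is already \emph{injective} on $L'$ for both parities of $n$. Indeed, for odd $n$ the kernel of $\Sign$ on $L_{2n}(\bZ[C_{p^{k}}])$ is the $\bZ/2$ generated by the image $\epsilon_k$ of the Arf element under $L_{2n}(\bZ) \to L_{2n}(\bZ[C_{p^k}])$, and the composite with the full transfer back to $L_{2n}(\bZ)$ is multiplication by $p^k$, hence the identity on $\bZ/2$; so $\epsilon_k \notin L'$. Thus $\Sign$ identifies $L'$ with its image in $\ker(R(C_{p^k}) \to \bZ)$, and surjectivity of $\tau_k'$ follows directly from surjectivity of restriction on representation rings. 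No correction is needed.

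There is also a small imprecision in your write-up: you assert that $\tau_k(\beta') - \alpha$ lies in $\ker(\Sign) \cap L'_{2n}(\bZ[C_{p^{k-1}}])$, but you have not arranged $\beta' \in L'$, so a priori $\tau_k(\beta')$ need not lie in $L'$ and the discrepancy is only known to lie in $\ker(\Sign)$. Your correction step (adding $\epsilon_k$) does fix this and yields $\tau_k(\beta) = \alpha \in L'$, whence $\beta \in L'$; but once you observe $\ker(\Sign) \cap L' = 0$ as above, you can simply choose $\beta' \in L'$ from the outset and skip the correction entirely.
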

\begin{proof}
By exactness of the surgery exact sequence it suffices to see that the restriction $\tau_k': L'_{2n}(\bZ [C_{p^{k}}]) \to L'_{2n}(\bZ[C_{p^{k-1}}])$ of the transfer maps to the subgroups~\eqref{eq:20} is surjective.
  
The multisignature identifies $L'_{2n}(\bZ[C_{p^{k}}])$ with a subgroup of the kernel of the restriction homomorphism $R(C_{p^k}) \to R(1) = \bZ$, namely 4 times the appropriate eigenspace for complex conjugation. Surjectivity then follows from the evident surjectivity of the restriction homomorphism $R(C_{p^k}) \to R(C_{p^{k-1}})$.
\end{proof}

The lemma implies that the element $((\partial W)/C_p \to \lensspace_1) \in T_1 \subset \mathcal{S}(\lensspace_1)$ admits a lift to the inverse limit of the subsystem $T_1 \leftarrow T_2 \leftarrow \cdots$.
The homomorphism $\phi: C_p \to \Diff(S^{2n-1})$ arose from the action on $\partial W$ upon choosing a diffeomorphism $W \approx S^{2n-1}$, and the compatible lifts of $((\partial W)/C_p \to \lensspace_1)$ to $T_k \subset \mathcal{S}(\lensspace_k)$ give compatible extensions of this $\phi$ to  $C_{p^k}$.

\bibliographystyle{amsalpha}
\bibliography{biblio}

\end{document}